\newcommand{\bP}{\mathbf{P}}
\newcommand{\bE}{\mathbf{E}}
\newcommand{\mc}{\mathcal}
\newcommand{\gD}{\Delta}
\newcommand{\veps}{\varepsilon}
\newcommand{\HitH}{{T_{H}}}
\newcommand{\HitG}{{T_{G}}}
\newcommand{\HitHP}{{T'_{H}}}
\newcommand{\Bone}{{\mathcal{B}_1}}
\newcommand{\Btwo}{{\mathcal{B}_2}}
\newcommand{\Bthree}{{\mathcal{B}_3}}
\newcommand{\Bfour}{{\mathcal{B}_4}}
\newcommand{\Bfive}{{\mathcal{B}_5}}
\DeclareMathOperator{\aut}{aut}
\DeclareMathOperator{\Bin}{Bin}
\DeclareMathOperator{\cl}{c_F}
\theoremstyle{definition}
\newtheorem{definition}{Definition}[section]
\newtheorem{remark}[definition]{Remark}
\newtheorem*{calgorithm}{Coupling Algorithm}
\theoremstyle{plain}
\newtheorem{theorem}[definition]{Theorem}
\newtheorem{proposition}[definition]{Proposition}
\newtheorem{lemma}[definition]{Lemma}
\newtheorem{corollary}[definition]{Corollary}
\title{The hitting time of nice factors}
\author[F. Burghart, M. Kaufmann, N. Müller, M. Pasch]{Fabian Burghart, Marc Kaufmann, Noela Müller, Matija Pasch}
\date{September 26, 2024}
\keywords{hitting time; graph factor; perfect matching; hypergraphs; nice graphs}
\subjclass[2020]{05C70 (primary); 05C80, 05C65, 60C05 (secondary)}
\thanks{All authors wish to thank Annika Heckel for the significant impact she had on the present article. F. B. has received funding from the European Union's Horizon 2020 research and innovation programme under the Marie Sk\l{}odowska-Curie Grant Agreement no. 101034253. M. K. gratefully acknowledges support by the Swiss National Science Foundation [grant number 200021\_192079]. N. M. is supported by the NWO Gravitation project NETWORKS under grant no. 024.002.003.  }
\address{Fabian Burghart, {\tt f.burghart@tue.nl}, Eindhoven University of Technology, Department of Mathematics and Computer Science, MetaForum MF 4.091, 5600 MB Eindhoven, the Netherlands.}
\address{Marc Kaufmann, {\tt marc.kaufmann@inf.ethz.ch}, ETH Zürich, Department of Computer Science, Andreasstrasse 5, 8050 Zürich, Switzerland.}
\address{Noela M\"uller, {\tt n.s.muller@tue.nl}, Eindhoven University of Technology, Department of Mathematics and Computer Science, MetaForum MF 4.084, 5600 MB Eindhoven, the Netherlands.}
\address{Matija Pasch, {\tt matija.pasch@gmail.com}, Munich, Germany.}
\begin{document}

\begin{abstract}
 Consider the random $u$-uniform hypergraph (or $u$-graph) process on $n$ vertices, where $n$ is divisible by $r>u\ge 2$. It was recently shown that with high probability, as soon as every vertex is covered by a copy of the complete $u$-graph $K_r$, it also contains a $K_r$-factor (RSA, Vol. 65 II, Sept. 2024). 
 The hitting time result is obtained using a process coupling, which is based on the proof of the corresponding sharp threshold result (RSA, Vol. 61 IV, Dec. 2022). The latter, however, was not only derived for complete $u$-graphs, but for a broader class of so-called nice $u$-graphs.

 The purpose of this article is to extend the process coupling for complete $u$-graphs to the full scope of the sharp threshold result: nice $u$-graphs. As a byproduct, we obtain the extension of the hitting time result to nice $u$-graphs. Since the relevant combinatorial bounds in the proof for the $K_r$-case cannot be generalized, we introduce new arguments that do not only apply to nice u-graphs, but will be relevant for the broader class of strictly 1-balanced u-graphs. Further, we show how the remainder of the process coupling for the $K_r$-case can be utilized in a black-box manner for any u-graph. These advances pave the way for future generalizations.
\end{abstract}

\maketitle

\section{Introduction}

Which properties of a graph guarantee the existence of a perfect matching? A partial answer to this question was given by Petersen over 130 years ago~\cite{petersen1891}.
Sixty years later, in 1947, it was completely resolved by Tutte~\cite{tutte1947factorization}.

In the setting of hypergraphs and introducing randomness, the following closely related question arises: 
Under which conditions are we \emph{likely} to find a perfect matching in the \emph{random $r$-uniform hypergraph} $H_r(n,\pi)$ with vertex set $[n]=\{1,\dots,n\}$, where each hyperedge is present independently with probability $\pi$? For the graph case $r=2$, this was answered by Erd\H{o}s and Rényi \cite{erdos1966existence}, who observed that $p^*=p^*(n)=\ln(n)/n$ is a \emph{sharp threshold} for the existence of a perfect matching -- in the following sense. Let $\varepsilon>0$ be arbitrarily small.
Then for any sequence $p\le (1-\veps)p^*$, the \emph{random graph} $G(n,p)=H_2(n,p)$ does not contain a perfect matching with high probability (whp), that is, with probability tending to one as $n$ increases.
On the other hand, for $p\ge (1+\veps)p^*$, the random graph $G(n,p)$ does contain a perfect matching whp.

The surprisingly challenging extension of this result to hypergraphs, that is, to $r>2$, became known as Shamir's problem. Since hypergraphs with isolated vertices cannot contain perfect matchings, and since the sharp threshold for the disappearance of isolated vertices and the sharp threshold for the existence of perfect matchings coincide in the graph case $r=2$, it was conjectured early on that the sharp threshold 
\[
 \pi_0=\binom{n-1}{r-1}^{-1}\log(n)
\]
for the disappearance of isolated vertices in $H_r(n,\pi)$ is also the sharp threshold for the existence of perfect matchings for any $r\ge 2$. Yet, it took 40 years until Kahn established that this is indeed the case in his breakthrough papers~\cite{kahn2022hitting,kahn2019asymptotics}:

\begin{theorem}[{\cite[Theorem 1.4]{kahn2019asymptotics}}]\label{thm:kahn_threshold}
 For any $r \geq 3$ and $\varepsilon >0$, whp the $r$-uniform random hypergraph $H_r(n, (1+\varepsilon)\pi_0)$ contains a perfect matching.
\end{theorem}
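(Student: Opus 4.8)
The plan is to reproduce Kahn's argument, which in fact pins down the order of the \emph{number} $\Psi$ of perfect matchings of $\mathcal H:=H_r(n,(1+\varepsilon)\pi_0)$: with high probability $\Psi$ agrees with its first-moment value up to a factor that is subexponential in $n$, and in particular $\Psi>0$.

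First I would record the typical structure of $\mathcal H$. With high probability $\mathcal H$ is ``pseudorandom'': all codegrees are $O(1)$, small vertex sets span roughly the expected number of hyperedges, and every vertex has degree $(1+o(1))(1+\varepsilon)\log n$ \emph{except} for a set $L$ of $o(n)$ \emph{low-degree} vertices, which have degree only $\Theta(\log n)$ but at least $1$, and which lie pairwise at distance $\ge 2$. The set $L$ is the only feature that breaks a naive analysis, so I would cover it first: using the pseudorandom expansion of $\mathcal H$, a Hall-type / system-of-distinct-representatives argument yields a matching $M_0$ covering all of $L$ by otherwise disjoint hyperedges that avoid all but $o(n)$ of the remaining vertices. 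After deleting $M_0$ and $V(M_0)$, what is left is (essentially) a $D$-regular, $O(1)$-codegree, pseudorandom $r$-uniform hypergraph $\mathcal H'$ on $(1-o(1))n$ vertices with $D=\Theta(\log n)$.

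Second --- the heart --- I would show that $\mathcal H'$ contains $D^{(1+o(1))n/r}$ perfect matchings, via the entropy/counting method of Johansson--Kahn--Vu as refined by Kahn. Concretely: process the vertices of $\mathcal H'$ in a uniformly random order and cover each still-uncovered vertex by a uniformly random \emph{available} hyperedge through it. A Rödl-nibble / self-correcting analysis keeps the residual hypergraph quasi-regular throughout, so at a typical step the number of available choices is $\approx D$ times the fraction of vertices still present; feeding $\sum_v \log(\#\text{choices at }v)$ into an entropy inequality then lower-bounds $\log\Psi$ and, in particular, shows that the process succeeds with high probability. Together with $M_0$ this produces a perfect matching of $\mathcal H$.

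The main obstacle is the extreme sparsity $D=\Theta(\log n)$. It is too small for the standard absorbing-structure technique, and it renders every failure probability in the nibble merely \emph{polynomially} small in $n$; consequently the self-correcting analysis and the entropy estimate must be carried out with exactly the right exponents --- tight enough that the expected number of vertices on which the process gets stuck is $o(1)$ rather than $n^{\Omega(1)}$ --- leaving essentially no slack, most critically near the low-degree vertices and near the end of the process, where quasi-regularity degrades and the Pippenger--Spencer near-perfect matching no longer suffices. This is precisely the difficulty that kept Shamir's problem open for forty years. (If one only wants the threshold to be $\Theta(\pi_0)$ rather than the sharp $(1+\varepsilon)\pi_0$, the argument can be bypassed via the Frankston--Kahn--Narayanan--Park spread lemma, since the uniform measure on perfect matchings of the complete $r$-graph on $[n]$ is $O\!\left(\binom{n-1}{r-1}^{-1}\right)$-spread; capturing the sharp leading constant, however, seems to require the entropy argument above.)
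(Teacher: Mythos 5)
This theorem is imported into the paper as a black box: the paper's ``proof'' consists entirely of the citation to \cite{kahn2019asymptotics}, and no argument is reproduced. There is therefore no internal proof for you to match, and your sketch should be read as an annotated pointer to the same external source. As such it is a fair high-level description of the strategy Kahn actually uses: isolate the $o(n)$ low-degree vertices and absorb them first, reduce to a pseudorandom near-$D$-regular residual hypergraph with $D=\Theta(\log n)$ and $O(1)$ codegrees, then lower-bound the number of perfect matchings of the residual via an entropy/nibble argument that shows the count is within a subexponential factor of its first moment. Your closing remark that the FKNP spread lemma gives $\Theta(\pi_0)$ but not the sharp constant $(1+\varepsilon)\pi_0$ is also correct.

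That said, as a \emph{proof} the proposal has a genuine gap --- one you flag yourself. Every hard step is named rather than executed: the Hall-type covering of $L$, the self-correcting nibble analysis that maintains quasi-regularity, and the entropy lower bound on $\log\Psi$ appear only at the level of ``one would do $X$; the delicate point is $Y$.'' In particular, the central quantitative assertion --- that all failure probabilities can be driven low enough that the expected number of stuck vertices is $o(1)$ rather than $n^{\Omega(1)}$, despite the merely logarithmic degree --- is precisely the content of \cite{kahn2019asymptotics} and is not established here. This is not a criticism of the roadmap, which is accurate, but it means the proposal does not prove Theorem~\ref{thm:kahn_threshold}; it explains where the difficulty lies. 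The paper itself takes the same stance, treating the result as an external ingredient rather than re-deriving it.
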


Remarkably, shortly after the appearance of \cite{kahn2019asymptotics}, Riordan and Heckel \cite{riordan2022random, heckel2020random} were able to employ Theorem \ref{thm:kahn_threshold} to obtain the sharp threshold for the existence of $K_r$-factors in $G(n,p)$. Recall that a $K_r$-factor is a spanning subgraph composed of vertex-disjoint copies of the complete graph $K_r$. Clearly, these questions are related: If we replace each hyperedge of a given $r$-uniform hypergraph with a perfect matching by a copy of $K_r$, we obtain a graph on the same vertex set that contains a $K_r$-factor. We can also apply our previous observation regarding isolated vertices and perfect matchings to the new setup: Say a vertex is $K_r$-isolated if it is not contained in a copy of $K_r$, then a graph with $K_r$-isolated vertices cannot contain a $K_r$-factor.

The sharp threshold $p_0$ for the existence of $K_r$-isolated vertices in $G(n,p)$ is for example established in \cite[Theorem~3.22]{janson2000random}. Again, guided by existing results for perfect matchings, that is the case $r=2$, it was conjectured that $p_0$ is also the sharp threshold for the existence of a $K_r$-factor in $G(n,p)$. Now, instead of the laborious task of adapting the proof of Theorem \ref{thm:kahn_threshold} to the new setup, Riordan and Heckel devised an ingenious coupling.

\begin{theorem}[\cite{riordan2022random,heckel2020random}]\label{thm:coupling_Kr}
 For any $r>2$ there exist $\delta,\veps>0$ such that the following holds.
 Let $p\le n^{-2/r+\veps}$ and $\pi=(1-n^{-\delta})p^{\binom r2}$, then there exists a coupling of $G(n,p)$ and $H_r(n,\pi)$ such that whp for every hyperedge $h$ in $H_r(n,\pi)$ the copy of $K_r$ on the vertex set $h$ is contained in $G(n,p)$.
\end{theorem}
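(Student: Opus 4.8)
The plan is to realise $H_r(n,\pi)$ as a suitably thinned copy of the random hypergraph $\mathcal C(Z)$ whose hyperedges are the $r$-sets spanning a $K_r$ in $Z=G(n,p)$. Write $q:=p^{\binom r2}$, so that $\pi=(1-n^{-\gd})q$ and $\bP(S\in\mathcal C(Z))=q$ for every $r$-set $S$. Since "$K_r^{(h)}\ssq Z$ for every $h$" is exactly "$H\ssq\mathcal C(Z)$", it suffices to couple $Z\sim G(n,p)$ with $H\sim H_r(n,\pi)$ so that $H\ssq\mathcal C(Z)$ whp; I would sample $Z$ first and then build $H$ inside $\mathcal C(Z)$. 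The only obstruction is that $\mathcal C(Z)$ is \emph{not} an independent family — two $r$-sets sharing $\ge 2$ vertices have positively correlated clique-indicators — whereas $H_r(n,\pi)$ is; the whole point is that in the regime $p\le n^{-2/r+\veps}$ these correlations are weak enough to be absorbed into the polynomial gap $q-\pi=n^{-\gd}q$, after conditioning on a likely event.

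The likely event: let $\mathcal G$ be the event that the \emph{overlap graph} on $\mathcal C(Z)$ — vertices the $K_r$-copies of $Z$, two adjacent when they share an edge — has all connected components of size at most a constant $C=C(r)$. I would prove $\bP(\mathcal G)\to1$ by a first-moment estimate: for any fixed connected overlap-pattern on $t$ copies of $K_r$, a short count shows the expected number of its occurrences in $G(n,p)$ has $n$-exponent at most $-(1-\tfrac2r)\bigl(t-O_r(1)\bigr)+O_r(\veps)$, which is negative and geometrically summable over $t\ge C$ once $\veps$ is small. This is the one step where the hypothesis $p\le n^{-2/r+\veps}$ is used decisively — and, per the abstract, it is exactly the combinatorial input that does not survive the passage to general nice $u$-graphs.

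On $\mathcal G$ the hypergraph $\mathcal C(Z)$ splits into overlap-components of size $\le C$; since distinct components span disjoint sets of edges of $K_n$, one can reveal $\mathcal C(Z)$ one component at a time and treat each in isolation. For a component $\mathcal K=\{S_1,\dots,S_t\}$ I would show that the joint law of the clique-indicators $(\one[S_i\in\mathcal C(Z)])_{i\le t}$ stochastically dominates $\Ber(\pi)^{\otimes t}$: by Strassen's up-set criterion this reduces to finitely many inequalities $\bP(S_i\in\mathcal C(Z)\ \forall i\in A)\le\bigl(\text{explicit polynomial in }q,\pi\bigr)$ over $A\ssq\mathcal K$, and since $\bP(S_i\in\mathcal C(Z)\ \forall i\in A)=p^{\,|A|\binom r2-k}$ with $k\ge1$ the number of shared edges, the bound holds because the "overlap saving" $p^{\binom r2-k}\le p^{\,r-1}=n^{-2(r-1)/r+O(\veps)}$ is far below $n^{-\gd}$ when $\veps,\gd$ are small. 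Thinning each component accordingly produces $H\sim H_r(n,\pi)$ with $H\ssq\mathcal C(Z)$ on $\mathcal G$; off $\mathcal G$ take $H$ independent of $Z$. Both marginals are then exactly $G(n,p)$ and $H_r(n,\pi)$, and $\bP(H\ssq\mathcal C(Z))\ge\bP(\mathcal G)\to1$.

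The main obstacle is the simultaneous calibration of the last two steps: one must pick $C$, $\veps$ and $\gd$ so that (i) overlap-components of size $>C$ are typically absent — a global first-moment bound — and (ii) inside any size-$\le C$ component the accumulated correlation never exceeds $q-\pi=n^{-\gd}q$ — finitely many local inequalities — with a single window of parameters making \emph{both} true. Equivalently, one needs the local "overlap savings" $p^{-k}$ to stay dominated by $n^{\gd}$ while $p$ is still large enough that the targeted hypergraph $H_r(n,\pi)$ is nontrivial; securing such a window is the crux, and it is precisely because these windows are much more delicate (and the relevant counts no longer factor so cleanly) for general strictly $1$-balanced $u$-graphs that the present paper must introduce new arguments in place of the $K_r$-specific ones.
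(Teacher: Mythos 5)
The paper does not actually prove Theorem~\ref{thm:coupling_Kr} --- it is taken as given from \cite{riordan2022random,heckel2020random} --- but it does reproduce the engine of that proof, namely Riordan's sequential Coupling Algorithm of Subsection~\ref{Sec:Riordan}: one fixes an arbitrary order $h_1,\dots,h_M$ of the $r$-sets, and at step $j$ computes the conditional probability $\pi_j$ of ``$K_r$ on $h_j$ lies in $G$'' given \emph{only the information revealed so far}, then tosses a $\pi/\pi_j$ coin. This produces the exact marginals by construction, with no stochastic-domination argument and no global decomposition of the clique hypergraph. Your proposal --- sample $Z\sim G(n,p)$ in full, form the clique hypergraph $\mathcal C(Z)$, and thin it component-by-component --- is a genuinely different route. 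It is closer in spirit to the informal ``embedding'' picture than to what Riordan actually does, and as written it has two gaps worth naming.

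First, the Strassen reduction is stated in the wrong direction and for the wrong family of events. To couple so that $H\ssq\mathcal C(Z)$ you need the law of $\mathcal C(Z)$ to stochastically \emph{dominate} that of $H_r(n,\pi)$, i.e.\ $\bP(\mathcal C(Z)\in U)\ge\bP(H_r(n,\pi)\in U)$ for every up-set $U$. The events $U=\{$all $r$-sets in $A$ present$\}$ are, because of the positive correlation you highlight, the \emph{easy} up-sets: for them $\bP(\mathcal C(Z)\in U)=p^{|A|\binom r2-k}\ge q^{|A|}>\pi^{|A|}$ trivially, and an \emph{upper} bound on $\bP(\mathcal C(Z)\in U)$, as you propose, plays no role. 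The content of domination lies in the opposite kind of up-set, of the form $\{$at least one of $A$ present$\}$, where the same positive correlation lowers $\bP(\mathcal C(Z)\in U)$ below the independent value $1-(1-q)^{|A|}$; showing this still exceeds $1-(1-\pi)^{|A|}$ (and, more generally, handling all monotone Boolean functions of a bounded block) is the real work, and it is not a ``finitely many overlap savings $\le n^{-\gd}$'' computation as stated. Second, the ``reveal $\mathcal C(Z)$ one component at a time and treat each in isolation'' step is circular as described: the component partition of $\mathcal C(Z)$ is itself a function of the whole of $\mathcal C(Z)$, so you are conditioning on global structure before defining the local thinning, and you must also account for the fact that conditioning on $\mathcal G$ tilts the law of $Z$ away from $G(n,p)$. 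Riordan's sequential algorithm is designed precisely to avoid both issues: conditional probabilities are computed along a filtration, so marginals are exact without any domination claim and without ever conditioning on the final component structure. Finally, note that the theorem asserts the coupling for all $r\ge 3$; for $r=3$ the overlap-component bound you want is noticeably more delicate (edge-stars of several triangles already have positive exponent at $p=n^{-2/3+\veps}$), which is part of why \cite{heckel2020random} is cited alongside \cite{riordan2022random}.
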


From this coupling it is not only immediate that $p_0\le n^{-2/r+\veps}$ is the sharp threshold for the existence of a $K_r$-factor in $G(n,p)$, much more than that, it gives a rigorous foundation for the correspondence of the hypergraph model and the graph model that we described above.

\subsection{Sharper than sharp thresholds}
Already back in 1985, Bollob\'as and Thomason \cite{bollobas1985}
observed that the close relationship between the disappearance of isolated vertices and the emergence of perfect matchings goes much deeper than the mere equality of the thresholds. Let $N_r=\binom{n}{r}$ and let $(H_t^r)_{t=0}^{N_r}$ be the standard \emph{random hypergraph process}, where we start with the empty hypergraph $H_0$ with vertices $[n]$ and add one of the hyperedges of size $r$, which are not already present, uniformly at random in each time step.
Let
\[
 T_H^r = \mathrm{min}\{ t : H_t^r \text{ has no isolated vertices}\}
\]
be the time $t$ where the last isolated vertex disappears.
For the case $r=2$, that is, for the graph process $(G_t)_t=(H^2_t)_t$, Bollob\'as and Thomason showed that, whp, as soon as we reach $T_H^2$, that is, the earliest possible time, the graph $G_{T_H^2}$ contains a perfect matching.
Kahn \cite{kahn2022hitting} extended this beautiful result to $r$-uniform hypergraphs for all $r \ge 3$.

\begin{theorem}[{\cite[Theorem 1.3]{kahn2022hitting}}]\label{theorem:kahnhitting}
 Let $r \ge 3$ and $n \in r \mathbb Z_+$, then whp $H^r_{T^r_H}$ has a perfect matching.
\end{theorem}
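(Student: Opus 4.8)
Because containing a perfect matching is a monotone increasing property, while ``having no isolated vertex'' is preserved by the addition of hyperedges, it suffices to show that whp $H^r_{T^r_H}$ already contains a perfect matching; the same then holds for $H^r_t$ at every $t\ge T^r_H$. The plan is to combine three ingredients: sharp concentration of the hitting time $T^r_H$, a \emph{robust sharp-threshold} refinement of Theorem~\ref{thm:kahn_threshold}, and a sprinkling step that absorbs the few low-degree vertices.

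First I would locate $T^r_H$. Put $m_\pm:=\tfrac nr(\log n\pm\omega)$ for some $\omega=\omega(n)\to\infty$ growing sufficiently slowly. Since the expected number of isolated vertices of $H^r_t$ at $t=\tfrac nr(\log n+c)$ is $(1+o(1))e^{-c}$, a standard first- and second-moment argument yields that whp $H^r_{m_-}$ still has an isolated vertex while $H^r_{m_+}$ has none, so whp $T^r_H\in(m_-,m_+]$. The same estimates, applied to vertices lying in at most $D$ hyperedges for a suitable large constant $D=D(r)$, show that whp the set $L$ of such low-degree vertices of $H^r_{m_-}$ satisfies $|L|=n^{o(1)}$ and is \emph{spread out}: whp no hyperedge of $H^r_{m_+}$ meets $L$ in two vertices, and no two vertices of $L$ have a common hyperedge-neighbour. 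Note that, by definition of $T^r_H$, every vertex of $L$ is incident to at least one hyperedge of $H^r_{T^r_H}$.

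The heart of the matter, and the step I expect to be the main obstacle, is to match essentially everything already at time $m_-$. At that point we sit at $(1+o(1))$, not $(1+\veps)$, times the threshold density $\pi_0$, so Theorem~\ref{thm:kahn_threshold} cannot be invoked as a black box. Instead one must revisit its proof --- the fractional-matching / nibble relaxation together with the concentration step --- and extract a quantitatively stronger conclusion: for $\pi=(\log n-\omega)/\binom{n-1}{r-1}$, whp $H_r(n,\pi)$ admits a matching whose set of uncovered vertices has size $n^{o(1)}$, and, crucially, this property is \emph{robust}, in the sense that it survives the removal of all hyperedges incident to an arbitrary spread-out set of $n^{o(1)}$ vertices (with the uncovered set then, perforce, containing that set and $n^{o(1)}$ further vertices). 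Deleting the hyperedges incident to $L$ and transferring from $H_r(n,\pi)$ to the process via the usual coupling, we obtain whp a matching $M_0$ of $H^r_{m_-}$ that avoids $L$ and leaves uncovered only $L$ together with a set $U$ of size $n^{o(1)}$; since $r\mid n$ and $M_0$ covers a multiple of $r$ vertices, the leftover set $W:=L\cup U$ has $|W|=n^{o(1)}$ and $r\mid|W|$.

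It remains to absorb $W$ using the $\approx\tfrac nr\omega$ hyperedges revealed between times $m_-$ and $T^r_H$. Conditionally on the remainder of the configuration, and because $W$ is sparse and spread out, the hyperedges that become incident to $W$ in this window behave to leading order like independent uniform random hyperedges, so whp each $w\in W$ gains many incident hyperedges whose remaining $r-1$ endpoints are $M_0$-covered. An iterative augmentation argument --- at each step rotating only a bounded number of hyperedges of the current matching, which is possible because the relevant exchange structure is sufficiently connected for a vertex set this small --- then reattaches the vertices of $W$ and produces a perfect matching of $H^r_{T^r_H}$ whp. (An essentially equivalent organisation of the argument isolates a deterministic sufficient condition for a perfect matching --- a hypergraph analogue of a Tutte-type / expansion condition --- and verifies it whp for $H^r_{T^r_H}$ directly, but the analytic content is the same.) This last step, while more technical than difficult, is not entirely routine; but the principal analytic hurdle remains the robust sharp-threshold estimate of the previous paragraph, since pushing from $(1+\veps)\pi_0$ down to $(1+o(1))\pi_0$ while retaining stability under a sparse deletion requires rerunning, rather than merely citing, the core of Kahn's proof.
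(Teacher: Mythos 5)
This theorem is not proved in the present paper: it is imported verbatim from Kahn~\cite{kahn2022hitting}, whose proof occupies an entire, highly technical article, and is used here purely as a black box to deduce Corollary~\ref{thm:Ffactor_HT} from Theorem~\ref{theorem:finalcoupling}. So there is no in-paper argument to compare yours against; the relevant comparison is with Kahn's own proof.

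Your sketch correctly identifies the overall shape one would expect such a proof to have (concentrating $T_H^r$ in a window of width $\Theta(n\omega/r)$ around $\tfrac nr\log n$, constructing a near-perfect matching at the lower end of the window that avoids the low-degree set $L$, then sprinkling). But the decisive step --- the ``robust sharp-threshold'' claim that at $\pi=(\log n-\omega)/\binom{n-1}{r-1}$ whp there is a matching missing only $n^{o(1)}$ vertices, and moreover that this survives deletion of all hyperedges touching a prescribed spread-out set of size $n^{o(1)}$ --- is precisely where the entire content of \cite{kahn2022hitting} lives, and you explicitly defer it to ``rerunning, rather than merely citing, the core of Kahn's proof.'' That is an honest acknowledgement, but it means the proposal is a strategy outline, not a proof. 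The final absorption paragraph is also underspecified: for hypergraph matchings with $r\ge 3$ there is no analogue of alternating-path augmentation, and the claim that ``the relevant exchange structure is sufficiently connected for a vertex set this small'' is exactly the kind of statement that needs (and in Kahn's work receives) a substantial argument, typically via absorbers built into the matching from the start rather than via post-hoc rotations. In short: the missing idea is not a single lemma but essentially the whole of \cite{kahn2022hitting}, and the closing augmentation step as stated would not go through without being replaced by a genuine absorber/entropy-based construction.
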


The catch with the coupling in Theorem \ref{thm:coupling_Kr} is that it is one-sided: We `embed' $H_r(n,\pi)$ into $G(n,p)$. Thus, at first, it seemed unlikely that an analogous coupling approach could be used to translate Theorem \ref{theorem:kahnhitting} to $K_r$-factors. Nevertheless, Heckel, Kaufmann, Müller and Pasch \cite{heckel2024hitting} recently provided an extension of the `static' coupling to a process coupling, thereby enabling the transferral of the hitting time result. 

\begin{theorem}[{\cite[Theorem 1.6]{heckel2024hitting}}] \label{thm:coupling_Proc}
 For any $r\ge 3$, there exists a coupling of $G_{T_G}$ and $H^r_{T^r_H}$ such that whp for every hyperedge $h$ of $H^r_{T^r_H}$ the copy of $K_r$ with vertices $h$ is contained in $G_{T_G}$.
\end{theorem}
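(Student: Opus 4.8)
The plan is to realize the coupling as an explicit procedure that generates both the graph process $(G_t)_t$ and the hypergraph process $(H^r_s)_s$ from a common source of randomness, while maintaining the invariant that every hyperedge currently present in the hypergraph carries a copy of $K_r$ on the same vertex set in the graph; one then shows this invariant survives up to both hitting times with high probability. As a first reduction I would pass to the ``binomial pictures'': fix a small $\veps>0$, take densities $p_\pm$ and $\pi_\pm$ at multiplicative distance $1\pm n^{-\Theta(1)}$ from the $K_r$-isolated-vertex threshold of $G(n,p)$ and the isolated-vertex threshold of $H_r(n,\pi)$ respectively (both comfortably below $n^{-2/r+\veps}$, so Theorem~\ref{thm:coupling_Kr} applies), and use the standard sandwiching $G(n,p_-)\subseteq G_t\subseteq G(n,p_+)$ on a deterministic window of times, together with its analogue for $H^r$. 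This isolates a deterministic \emph{critical window} of time indices in which, whp, both the set of $K_r$-isolated vertices of $G_t$ and the set of isolated vertices of $H^r_s$ shrink from $\omega(1)$ down to the empty set; in particular $T_G$ and $T^r_H$ lie in their respective windows whp.

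Below the window (the \emph{bulk phase}) I would invoke a process-adapted version of the static coupling of Theorem~\ref{thm:coupling_Kr}. At these densities there are still $\gg 1$ isolated vertices, so the embedding has ample slack, and the remaining task is to ``temporally refine'' the one-shot coupling of $G(n,p)$ and $H_r(n,\pi)$ into a coupling of the truncated edge-by-edge processes, revealing hyperedges and the edges of their $K_r$'s in a consistent order. The combinatorial input this requires is precisely a package of near-threshold estimates on copies of $K_r$: that a typical vertex lies in only $O(1)$ copies, that two copies rarely share two or more vertices, that ``almost-copies'' (copies of $K_r$ minus one edge) are sparse enough to be negligible, plus the attendant first- and second-moment bounds. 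These are the $K_r$-specific bounds; they are exactly what one replaces when passing to nice, or more generally strictly $1$-balanced, $u$-graphs, whereas the refinement machinery itself is generic.

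The heart of the argument is the \emph{endgame} inside the critical window, where the one-sidedness of Theorem~\ref{thm:coupling_Kr} bites. Here one must pair up disappearance events: when the hypergraph process covers its last (respectively second-to-last, and so on) isolated vertex $v$ by a hyperedge $h=\{v,w_1,\dots,w_{r-1}\}$, the coupling must already have revealed all $\binom{r}{2}$ edges of the $K_r$ on $h$ inside $G$, and simultaneously one must check that $v$ was still $K_r$-isolated in $G$ right before, so that the two hitting times genuinely coincide under the coupling. The structural fact making this possible, deduced from the near-threshold estimates above, is that whp the families of $K_r$-copies that could ever cover the $O(\log n)$ vertices still isolated inside the window are pairwise vertex-disjoint and can be treated independently; one then processes these vertices one at a time, coupling the relevant graph edges and hyperedges explicitly, and finishes with a union bound over the $O(\log n)$ critical vertices and the polynomially many candidate completing structures.

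Finally one must verify that the graph process produced by the procedure has exactly the law of the uniform random graph process: the edges that do not belong to any coupled $K_r$ (the ``free'' edges) must be inserted with the correct conditional distribution, which is a delicate bookkeeping exercise, most cleanly organized via deferred decisions and a sprinkling/absorption step. On the intersection of all the good events above, the invariant holds for every $t\le T_G$ and $s\le T^r_H$; in particular at the pair $(T_G,T^r_H)$ every hyperedge of $H^r_{T^r_H}$ carries a copy of $K_r$ in $G_{T_G}$, which is the claim (and, combined with Theorem~\ref{theorem:kahnhitting}, yields the hitting-time result for $K_r$-factors). I expect the endgame synchronization of the previous paragraph to be the main obstacle, since it is the only place where the asymmetry of the static coupling and the coincidence of hitting times must be forced vertex by vertex; the verification of the correct graph-process marginal is a close second.
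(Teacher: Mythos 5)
Your proposal captures some of the scaffolding (reduce to the binomial picture, locate the critical window, turn Riordan's static coupling into a process coupling, worry about the marginals), but it misses the structural idea that carries the actual proof, and it contains one incorrect structural claim that your plan leans on.

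The decisive issue is your intention to ``maintain the invariant that every hyperedge currently present in the hypergraph carries a copy of $K_r$ on the same vertex set in $G$'' throughout the process. This invariant cannot in general be maintained by the process version of Riordan's coupling. After ordering the revealed edges, there is a small but nonzero set $\mathcal E$ of hyperedges of $H_{T_H}$ whose $K_r$ is \emph{not} present in $G_{T_G}$; the culprits are pairs of $F$-edges ($K_r$-copies) sharing a $u$-edge (an edge, for $u=2$), i.e.\ the ``partner $F$-edges'' that appear in the paper's $\mathcal B_4$ and $\mathcal F$. Your assertion that ``the families of $K_r$-copies that could ever cover the $O(\log n)$ vertices still isolated inside the window are pairwise vertex-disjoint'' is exactly what fails: two $K_r$-copies on $r\ge 4$ vertices may share an edge (they cannot share more, absent avoidable configurations, but sharing an edge is enough), and that shared edge desynchronizes the $H$-process and the $G$-process precisely at the hitting time. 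The paper's proof does not fight this; it accepts $\mathcal E\neq\emptyset$, shows $\mathcal E\subset\mathcal F$ for a structured set $\mathcal F$ of partner edges, and then couples a \emph{fresh} stopped process $H'_{T_{H'}}$ into $H_{T_H}\setminus\mathcal F$. The random object in the theorem statement is this $H'_{T_{H'}}$, not the directly coupled $H_{T_H}$, and the conclusion is assembled from the chain $H'_{T_{H'}}\subseteq H_{T_H}\setminus\mathcal F\subseteq H_{T_H}\setminus\mathcal E\subseteq \cl(G_{T_G})$. Without this resampling step, your plan has no way to repair the failures at the partner edges, so the invariant breaks at the hitting time, which is the one place it must not.

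Secondary but also genuine: your ``temporal refinement'' of the static coupling is stated as if it were routine. The construction actually used is a specific one: apply Riordan's coupling once at $(p_+,\pi_+)$, uniformly order the revealed edges, insert dummy $u$-edges for the shared edges of partner pairs, and put both processes on a common $[0,1]$ auxiliary-time scale; the synchronization of hitting times is then established via the static lemma that whp no low-degree vertex of $H$ is incident to an extra copy of $F$ in $G$ (Lemma~\ref{lemma:lowdegnoextra} here). Also, the sandwiching $G(n,p_-)\subseteq G_t\subseteq G(n,p_+)$ you invoke is not used and is not obviously compatible with the static coupling, which is applied only at a single density. Finally, note that the paper itself distinguishes $r\geq 4$ from $r=3$ because of clean $3$-cycles; your sketch would need a separate argument for $r=3$.
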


As before, this result  does not only directly imply that whp, a $K_r$-factor exists in the hitting time graph, it further showcases the connection between the two models.

\subsection{Nice hypergraphs}\label{sec:nice_hypergraphs}
The story so far seems neatly wrapped up, with all plotlines resolved. But this is far from being true: There is a long-standing conjecture \cite[below Conjecture 1.1]{johansson2008factors} that Theorem~\ref{thm:coupling_Proc}  does in fact not only hold for $K_r$, but for any strictly 1-balanced $u$-uniform hypergraph. For brevity and to stay close to \cite{riordan2022random}, we henceforth refer to $u$-uniform hypergraphs as $u$-graphs.

When we consider more general $u$-graphs $F$ than complete $u$-graphs, more than one copy of $F$ might be present on a given vertex set. Hence, simple $r$-uniform hypergraphs cannot adequately encode copies of $F$ anymore. 
Instead, we resort to \emph{$F$-graphs}, which are $r$-uniform labeled multi-hypergraphs with vertices $[n]$, where hyperedges are labelled by the possible copies of $F$ on the hyperedge vertex set. 
Note that for $F=K_r$, an $F$-graph is just a simple $r$-uniform hypergraph (if we ignore the labels).
Further, we may define the \emph{random $F$-graph} $H_F(n,\pi)$ with vertices $[n]$, where we include each $F$-edge independently with probability $\pi$.

While the sharp threshold result for strictly $1$-balanced $u$-graphs is still missing, Riordan's coupling result Theorem \ref{thm:coupling_Kr} covers much more than just the case $F=K_r$. 
Let a $u$-graph be \emph{nice} if it is strictly 1-balanced and $3$-connected, plus a technical condition for $u=2$, which is made explicit in Section \ref{sec:graphs_hypergraphs} below.
Examples for the graph case $u=2$ are complete graphs $K_r$ for $r\ge 4$ and complete bipartite graphs $K_{m,n}$ for $m,n\ge 3$.

\begin{theorem}[{\cite[Theorem 18]{riordan2022random}}]\label{theorem:riordancoupling}
 Let $F$ be a fixed nice $u$-graph, $u \geq 2$, with $v(F) = r \geq 4$ and $e(F)=s$. Let $d_1=s/(r-1)$. Then there are positive constants $\varepsilon(F),\delta(F)>0$ such that, for any $p=p(n) \le n^{-1/d_1 + \varepsilon}$, letting $\pi=(1-n^{-\delta})p^{s}$, we may couple the random $u$-graph $G=H_u(n,p)$ with the random $F$-graph $H=H_F(n,\pi)$ so that, whp, for every $F$-edge in $H$ the corresponding copy of $F$ in $G$ is present.
\end{theorem}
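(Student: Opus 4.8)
The plan is to build the coupling from a multi-round exposure of $G=H_u(n,p)$ together with a randomised sampling of the copies of $F$ that $G$ contains: reveal $G$ as a union $G=G_1\cup\dots\cup G_k$ of independent sparse pieces, and construct the random $F$-graph $H$ so that every $F$-edge placed in $H$ is by construction a copy present in $G$. The work is then to arrange that, after discarding an event of probability $o(1)$, the law of $H$ agrees with the product law $H_F(n,\pi)$; the factor $1-n^{-\delta}$ in $\pi$ provides exactly the slack that makes this matching possible.

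\emph{Combinatorial core: copies of $F$ in $G$ are spread out.} Write $\mu:=n^{r}p^{s}$ for the order of magnitude of the expected number of copies of $F$ in $G$. Fix a connected $u$-graph $J$ obtained by gluing two labelled copies of $F$ along a common sub-$u$-graph on $k$ vertices and $\ell$ hyperedges with $k\ge 2$; then $v(J)=2r-k$ and $e(J)=2s-\ell$, so the expected number of such configurations in $G$ is $O\bigl(n^{2r-k}p^{2s-\ell}\bigr)=n^{r-k}p^{s-\ell}\cdot O(\mu)$. I would show the prefactor is $o(1)$ for every admissible pattern $J$: for $u\le k\le r-1$ the glued piece is a proper sub-$u$-graph of $F$, and strict $1$-balancedness gives $\tfrac{\ell}{k-1}<\tfrac{s}{r-1}=d_1$, which is equivalent to $n^{r-k}p^{s-\ell}\to 0$ once $\veps=\veps(F)$ is small (there being only finitely many patterns); for $k=r$ distinctness of the two copies forces $\ell\le s-1$, hence an extra factor $p^{s-\ell}\le p=o(1)$; and the borderline cases with few shared vertices are controlled using the $3$-connectedness of $F$ and, when $u=2$, the technical hypothesis. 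Consequently, whp at most $o(\mu)$ of the copies of $F$ in $G$ share a hyperedge or $\ge 2$ vertices with any other copy; since the number of copies is $\Theta(\mu)$ whp (second moment, again via strict $1$-balancedness), all but an $o(1)$-fraction of the copies are hyperedge-disjoint from every other copy.

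\emph{From spread-out copies to the coupling.} In the exposure $G=G_1\cup\dots\cup G_k$, attach to each copy $C$ the round in which its last hyperedge is revealed and the identity of that hyperedge. By the core estimate applied to the relevant partial graphs, whp all but $o(\mu)$ copies have a \emph{private} completing hyperedge that lies in no other copy present in $G$; for such a copy the event that it is completed, and in which round, is governed by a fresh Bernoulli variable independent of the analogous events for the other such copies. I would then include each completed copy into $H$, independently, with a probability chosen so that every potential $F$-edge ends up in $H$ with probability exactly $\pi$ — feasible because the $n^{-\delta}$ slack dominates the $o(1)$ deficits coming from the non-private copies and from copies completed by several fresh hyperedges simultaneously. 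Conditioning on the good event, the resulting $H$ is, apart from the $o(\mu)=o\bigl(\bE\lvert H_F(n,\pi)\rvert\bigr)$ copies involved in overlaps, a family of independent $\Ber(\pi)$ variables; hence, by a Chen--Stein / Janson-type estimate, $\operatorname{law}(H)$ is within $o(1)$ in total variation of $H_F(n,\pi)$. A standard coupling of two total-variation-close laws then produces $H'\sim H_F(n,\pi)$ with $H'=H$ whp, and $(G,H')$ is the desired coupling since whp $H'=H\subseteq\{\text{copies of }F\text{ in }G\}$.

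\emph{Main obstacle.} The heart of the proof is the combinatorial core, and the difficulty is that it must hold uniformly over the whole admissible range: when $p$ is close to $n^{-1/d_1+\veps}$, the graph $G$ already contains $n^{1+\Theta(\veps)}$ copies of $F$ and genuinely many pairs of copies sharing a hyperedge or a few vertices, so one cannot hope overlaps are absent — only that they are vanishingly rare relative to $\mu$. This is precisely where the density inequality coming from strict $1$-balancedness, sharpened by $3$-connectedness (and the extra hypothesis for $u=2$), is indispensable and where a cruder count would break down; it is also the step most sensitive to the detailed structure of $F$, so care is needed to keep the bounds uniform in $p$ and to track how $\veps$ and $\delta$ must be chosen small in terms of $F$. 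The remaining ingredients — the multi-round exposure, the marginal bookkeeping, the Poisson-approximation step, and the coupling of two close laws — follow established patterns.
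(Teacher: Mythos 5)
Theorem~\ref{theorem:riordancoupling} is quoted from \cite[Theorem~18]{riordan2022random} and not reproved here, but the paper does reproduce the actual coupling algorithm in Section~\ref{Sec:Riordan}: one sweeps through the potential $F$-edges $h_1,\dots,h_M$ in a fixed order, computes at each step the conditional probability $\pi_j$ that the corresponding copy $F_j$ sits in $G$ given everything revealed so far, and includes $h_j$ in $H$ after a coin toss with success probability $\pi/\pi_j$ (falling back to a plain $\pi$-coin, and declaring failure, when $\pi_j<\pi$). This history-dependent normalisation makes the conditional law of $\one\{h_j\in H\}$ exactly $\Ber(\pi)$ at every step, so $H\sim H_F(n,\pi)$ with no approximation; the combinatorics you describe are used only to show the fall-back branch is hit with negligible probability. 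Your proposal takes a genuinely different route: reveal $G$, then admit each copy of $F$ in $G$ into $H$ independently with a fixed acceptance probability $q$, and argue that $\operatorname{law}(H)$ is $o(1)$-close to $H_F(n,\pi)$ in total variation.

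That total-variation claim is where the argument breaks. A fixed $q$ can tune the marginals to $\bP[h_j\in H]=\pi$, but it cannot tune the pairwise correlations, and these are far from those of a product measure. Concretely, take $F=K_4$ (so $u=2$, $s=6$) at the critical scale $\pi\asymp \log n/n^3$, $p\asymp(\log n)^{1/6}n^{-1/2}$, and let $S$ count pairs of $F$-edges in $H$ whose vertex sets overlap in at least $2$ vertices. Under $H_F(n,\pi)$ one has $\bE[S]\asymp n^{6}\pi^2\asymp(\log n)^2$, whereas under your construction two overlapping copies of $K_4$ share an edge and are jointly present in $G$ with probability $\asymp p^{11}$, giving $\bE[S]\asymp n^{6}q^2p^{11}\asymp n^{1/2}(\log n)^{11/6}$. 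Both quantities concentrate, so whp $S>n^{1/4}$ under your law and $S<(\log n)^3$ under $H_F(n,\pi)$; the statistic $S$ thus separates the two laws and the total-variation distance is $1-o(1)$, not $o(1)$. The $n^{-\delta}$ slack is a lower-order correction and cannot absorb a discrepancy of polynomial size. Riordan's coupling handles exactly this: when a previously exposed copy already supplies a hyperedge of $F_j$, the conditional probability $\pi_j$ jumps to order $p^{s-1}$, so the acceptance probability $\pi/\pi_j$ drops to order $p$ and the overlapping pair is correctly down-weighted. Your ``spread-out copies'' estimate via strict $1$-balancedness, $3$-connectedness and the $u=2$ condition is sound and is essentially the engine of Riordan's bad-event bound, but it supports bounding a small failure probability, not a total-variation approximation of the whole law of $H$.
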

\begin{remark} \label{rem:constants}
 While the constant $\varepsilon(F)$ is explicit in \cite{riordan2022random}, Theorem 18 in \cite{riordan2022random} only mentions an unspecified $o(1)$-term in place of $n^{-\delta}$. However, the existence of the constant $\delta(F)$ can be extracted from the proof in \cite{riordan2022random}, as mentioned in \cite[Remark 4]{riordan2022random} for the case $F=K_r$.
\end{remark}

Given the coupling, the sharp threshold for the existence of $F$-factors follows as a corollary.

\begin{theorem}[{\cite[Theorem 10]{riordan2022random}}]\label{thm:riordan_sharp}
 For a fixed nice $u$-graph $F$ with $v(F)=r$ and $e(F)=s$,
 \[
  p_0 = ((\mathrm{aut}(F)/r)n^{-r+1} \log n)^{1/s}
 \]
 is a sharp threshold for $H_u(n,p)$ to contain an $F$-factor.
\end{theorem}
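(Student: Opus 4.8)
The plan is to establish the two halves of the sharp threshold separately; only the $1$-statement will invoke the coupling of Theorem~\ref{theorem:riordancoupling}. Throughout $r\mid n$ (necessary for an $F$-factor to exist) and we write $a=\mathrm{aut}(F)$. For the $1$-statement, fix $\gep>0$ and put $p=p(n)=(1+\gep)p_0$. Since $p_0^s=(a/r)\,n^{-r+1}\log n$, we have $p_0=n^{-(r-1)/s+o(1)}=n^{-1/d_1+o(1)}$, so for $n$ large $p\le n^{-1/d_1+\gep(F)}$ and Theorem~\ref{theorem:riordancoupling} applies: with $\pi=(1-n^{-\gd})p^s$ it couples $H_u(n,p)$ with $H=H_F(n,\pi)$ so that whp every $F$-edge of $H$ is realized as a copy of $F$ in $H_u(n,p)$; hence it suffices that $H$ have an $F$-factor whp. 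I would project $H$ onto the simple $r$-graph $\widehat H$ on $[n]$ in which an $r$-set $S$ is an edge iff at least one of the $r!/a$ copies of $F$ on $S$ is an $F$-edge of $H$. Distinct $r$-sets carry disjoint families of potential $F$-edges, so the events $\{S\in\widehat H\}$ are independent and $\widehat H$ is distributed exactly as $H_r(n,q)$ with $q=1-(1-\pi)^{r!/a}$; moreover any perfect matching of $\widehat H$ lifts to an $F$-factor of $H$ by choosing one $F$-edge on each matched $r$-set, so it is enough that $\widehat H$ have a perfect matching whp. The rest is a constant chase: $q=(1-o(1))\tfrac{r!}{a}\pi=(1-o(1))(1+\gep)^s\tfrac{r!}{a}p_0^s=(1-o(1))(1+\gep)^s(r-1)!\,n^{-r+1}\log n$, whereas $\pi_0=\binom{n-1}{r-1}^{-1}\log n=(1+o(1))(r-1)!\,n^{-r+1}\log n$, so $q\ge(1+\gep/2)\pi_0$ for $n$ large; since having a perfect matching is monotone, $H_r(n,q)$ stochastically dominates $H_r(n,(1+\gep/2)\pi_0)$, and Kahn's Theorem~\ref{thm:kahn_threshold} gives a perfect matching in $\widehat H$ whp. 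Tracing back yields an $F$-factor in $H$ and hence in $H_u(n,p)$.

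For the $0$-statement, fix $\gep>0$ and set $p=(1-\gep)p_0$. Let $X$ count the $F$-isolated vertices of $H_u(n,p)$, i.e.\ those lying in no copy of $F$; since a $u$-graph with an $F$-isolated vertex has no $F$-factor, I only need $X\ge 1$ whp. A fixed vertex $v$ lies in $N_v=\binom{n-1}{r-1}\tfrac{r!}{a}$ copies of $F$, each present with probability $p^s$, and the normalization of $p_0$ is exactly what makes $N_v p^s=(1+o(1))(1-\gep)^s\log n$. I would apply Janson's inequality to the family of events ``copy $c$ is present'', $c\ni v$: the correction term $\Delta_v$ is a sum of $n^{2r-a'-1}p^{2s-b'}$ over the finitely many ways two such copies can overlap in $a'$ vertices and $b'\ge 1$ edges, and strict $1$-balancedness of $F$ (which forces $b'/(a'-1)<d_1$ for the proper subgraph realized by such an overlap) makes each summand, hence $\Delta_v$, equal to $o(1)$. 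Thus $\bP(v\text{ is }F\text{-isolated})\ge e^{-N_v p^s-\Delta_v}=n^{-(1+o(1))(1-\gep)^s}$, so $\bE X\ge n^{1-(1+o(1))(1-\gep)^s}\to\infty$ as $(1-\gep)^s<1$. A parallel two-vertex estimate---where the only nontrivial correlation between ``$u$ is $F$-isolated'' and ``$v$ is $F$-isolated'' comes from the $O(n^{r-2})$ copies of $F$ meeting both---gives $\bE[X^2]=(1+o(1))(\bE X)^2$, so Chebyshev's inequality yields $X\ge 1$ whp. (The threshold for every vertex of $H_u(n,p)$ to lie in a copy of a strictly $1$-balanced $F$ is classical, so this half may alternatively be quoted.)

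I expect the main obstacle to be entirely quantitative, in two places. In the $1$-statement it is the verification that $\widehat H$ is a genuine binomial random $r$-graph and that the constants align so that $q$ sits strictly above Kahn's threshold $\pi_0$---this is precisely what pins down the factor $a/r$ in the definition of $p_0$. In the $0$-statement it is the control of the Janson (equivalently, second-moment) error terms, which is exactly the step where strict $1$-balancedness of $F$ is used and which would fail for a merely $1$-balanced $F$. Everything else is bookkeeping, and no ideas beyond Theorems~\ref{thm:kahn_threshold} and~\ref{theorem:riordancoupling} appear to be required.
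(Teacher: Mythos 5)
Your proof is correct and reconstructs essentially the argument the paper attributes to \cite{riordan2022random}: the paper itself does not reprove Theorem~\ref{thm:riordan_sharp} but cites it, with the remark that ``given the coupling, the sharp threshold for the existence of $F$-factors follows as a corollary.'' Your $1$-statement is exactly that corollary: apply Theorem~\ref{theorem:riordancoupling} at $p=(1+\gep)p_0$, pass from $H_F(n,\pi)$ to the simple $r$-graph $\hat H_F\sim H_r(n,q)$ via Lemma~\ref{lem:pi'}, check that $q$ lands strictly above $\pi_0$ (the $(r!/a)\cdot(a/r)=(r-1)!$ cancellation is the whole point of the normalization in $p_0$), invoke Kahn's Theorem~\ref{thm:kahn_threshold}, and lift the perfect matching to an $F$-factor; monotonicity of the $F$-factor property extends this from $p=(1+\gep)p_0$ to all larger $p$. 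Your $0$-statement via $F$-isolated vertices and a second-moment argument is the standard route and is the one used in \cite{riordan2022random} as well, with strict $1$-balancedness doing exactly the work you say it does in taming the overlap terms.

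One minor quibble: the lower bound $\bP(v\ F\text{-isolated})\ge e^{-N_vp^s-\Delta_v}$ is really Harris/FKG plus the elementary estimate $(1-p^s)^{N_v}\ge e^{-N_vp^s(1+O(p^s))}$, not ``Janson's inequality'' (Janson is the matching \emph{upper} bound $e^{-\mu+\Delta/2}$, which you do need for the pair estimate $\bP(A_u\cap A_v)\le(1+o(1))\bP(A_u)\bP(A_v)$, since $A_u,A_v$ are decreasing and hence positively correlated by FKG). This is a labelling issue only and does not affect the validity of the computation.
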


\subsection{Main result}
The purpose of this article is to establish the hitting time version of Theorem~\ref{theorem:riordancoupling}, to showcase the adaptability of the coupling approach in \cite{heckel2024hitting}, and to work towards general strictly $1$-balanced $u$-graphs.
Thus, for the random $u$-graph process $(G_t)_t=(H^u_t)_{t=0}^{N_u}$, let
\[
 T_G = \mathrm{min}\{ t : \text{ every vertex in $G_t$ is contained in at least one copy of $F$}\}
\]
be the time where the last vertex is covered by a copy of $F$. We also need a process version of the random $F$-graph $H_F(n,\pi)$ in Section \ref{sec:nice_hypergraphs}. Let $\aut(F)$ be the number of automorphisms of $F$, then the total number of $F$-edges on the vertex set $[n]$ is
\[
 M=\binom{n}{r}\frac{r!}{\aut(F)}.
\]
Now, the \emph{random $F$-graph process} $(H_t)_{t=0}^M$ starts with the empty $F$-graph $H_0$ with vertices $[n]$, and in each step we add a new $F$-edge, uniformly at random.
Let
\[
 T_H = \mathrm{min}\{ t : H_t\text{ has no isolated vertices}\}
\]
be the time where the last isolated vertex in $H_t$ disappears. Then the following holds.

\begin{theorem} \label{theorem:finalcoupling}
 Let $F$ be a fixed nice $u$-graph with $r \ge 4$ vertices. Then we may couple the random $u$-graph process $(G_t)_{t=0}^{N_u}$ with the random $F$-graph process  $(H_t)_{t=0}^{M}$ so that, whp, for every hyperedge in $H_\HitH$ the corresponding copy of $F$ is contained in $G_{\HitG}$.
\end{theorem}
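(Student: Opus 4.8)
The plan is to follow the blueprint of \cite{heckel2024hitting}, which established Theorem~\ref{thm:coupling_Proc} for $F = K_r$ by bootstrapping the static coupling of Theorem~\ref{thm:coupling_Kr} into a process coupling. Here we start instead from Riordan's static coupling for nice $u$-graphs, Theorem~\ref{theorem:riordancoupling}, and aim to promote it to the process level. The overall architecture will be: (1) run the two processes jointly up to a suitable "sprinkling time" $p = p(n) = n^{-1/d_1+\veps'}$ just below the threshold in Theorem~\ref{theorem:riordancoupling}, where with high probability both processes are still far from their respective hitting times (in particular many vertices are still uncovered/isolated); (2) at that time, invoke Theorem~\ref{theorem:riordancoupling} to obtain, whp, a coupling in which every $F$-edge of $H = H_F(n,\pi)$ with $\pi = (1-n^{-\delta})p^s$ has its copy of $F$ present in $G = H_u(n,p)$; (3) couple the two processes from time $p$ (resp.\ $\pi$) onward so that this containment property is preserved and so that $T_G$ and $\HitH$ are reached in a synchronized fashion — the key point being that the \emph{last few} vertices to be covered in $G$ correspond precisely to the last few isolated vertices in $H$. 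Steps (1)--(3) are structured so that the bulk of the argument in \cite{heckel2024hitting}, which handles the passage from the static to the process coupling once one controls the local structure around the last uncovered vertices, can be reused in a black-box fashion for general $F$.

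The main obstacle — and the place where the $K_r$ proof genuinely does not generalize — is the combinatorial control of how copies of $F$ accumulate on vertices near the threshold. For $K_r$, the number of copies of $K_r$ through a vertex, and the joint distribution of such copies on a pair or small set of vertices, is governed by clean binomial-type estimates; for a general nice $u$-graph $F$ these break down because (a) a single vertex set can carry up to $r!/\aut(F)$ distinct copies of $F$, (b) the overlap structure of two copies of $F$ sharing a vertex is far richer, and (c) the ``density'' parameter $d_1 = s/(r-1)$ replaces the explicit exponents of the $K_r$ case. I expect the bulk of the new work to be a set of first- and second-moment estimates showing that, at time $p$, the number of $F$-copies covering a given vertex $v$ is concentrated, that distinct uncovered vertices are covered through essentially disjoint witnesses, and that the "last" uncovered vertices in $G$ behave like an independent set from the point of view of $F$-coverage — the analogue of the isolated-vertex analysis in the hypergraph process. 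This is precisely where strict $1$-balancedness and $3$-connectedness of $F$ enter: $1$-balancedness ensures that a vertex is unlikely to be covered "for free" by a sub-configuration denser than $F$ itself, while $3$-connectedness rules out degenerate overlaps of two copies of $F$ on few common vertices.

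Concretely I would proceed as follows. First, fix $\veps' < \veps(F)$ and set the sprinkling time $p_1 = n^{-1/d_1 + \veps'}$ and $\pi_1 = (1-n^{-\delta})p_1^{\,s}$, and verify via Theorem~\ref{thm:riordan_sharp} and a standard first-moment computation that whp at time $p_1$ (resp.\ $\pi_1$) a polynomially large number of vertices remain uncovered in $G$ (resp.\ isolated in $H$); this guarantees that we are genuinely below both hitting times. Second, prove the combinatorial heart: conditional on the state at time $p_1$, for every vertex $v$ the set of $F$-copies through $v$ in $G$ that are "new" compared to $H$'s $F$-edges is well understood, and the coverage events for distinct still-uncovered vertices are asymptotically independent — here one uses that in $H_F(n,\pi_1)$ the expected number of $F$-edges on $v$ is $\Theta(n^{-(r-1)}\pi_1^{-1}\cdot\text{(stuff)})$... more precisely one tracks the quantity $n^{r-1}\pi_1/\text{aut-factor}$ and shows it is $n^{o(1)}$ away from $1$ in the right sense, mirroring the $\log n$ bookkeeping in Theorem~\ref{thm:riordan_sharp}. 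Third, run both processes from $p_1$ to completion, adding $u$-edges and $F$-edges in a coupled order so that whenever an $F$-edge is added to $H$, all its constituent $u$-edges are added to $G$ (possible since at time $p_1$ the copy is already present by Theorem~\ref{theorem:riordancoupling}, and new $F$-edges only require new $u$-edges already being sprinkled), and so that the last vertex covered in $G$ is the last vertex de-isolated in $H$; this last synchronization is where the reusable machinery of \cite{heckel2024hitting} — their partner-set and history arguments, encoded in the macros \histj, \partnerset{} — is invoked verbatim, the only input needed being the local estimates from Step~2. Putting these together yields the coupling asserted in Theorem~\ref{theorem:finalcoupling}, and the $F$-factor hitting-time statement follows as an immediate corollary via Theorem~\ref{theorem:kahnhitting} applied to the $r$-uniform hypergraph underlying $H_\HitH$.
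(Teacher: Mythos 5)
Your high-level plan is close to the paper's: bootstrap Riordan's static coupling into a process coupling along the lines of \cite{heckel2024hitting}, concentrate the new work in combinatorial estimates that replace the $K_r$-specific ones, and reuse later steps of \cite{heckel2024hitting} as a black box. However, your mechanism for the bootstrapping is wrong, and the key new lemma is neither identified nor of the type you propose. The sprinkling setup is internally inconsistent: you set $p_1 = n^{-1/d_1+\veps'}$ with $\veps'>0$ and assert that at this time many vertices are still uncovered, but $p_1$ is polynomially larger than the sharp threshold $p_0 \sim n^{-1/d_1}(\log n)^{1/s}$ from Theorem~\ref{thm:riordan_sharp}, so whp \emph{every} vertex is covered and both processes are well past their hitting times at $p_1$. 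Moreover, ``running the two processes jointly up to $p_1$ and then invoking Theorem~\ref{theorem:riordancoupling}'' puts the cart before the horse, since one cannot run the processes jointly without having the coupling already. The paper goes in the other direction: it applies Riordan's static coupling once, at $p_+$ and $\pi_+$ (the \emph{end} of the critical window, where whp there are no isolated/uncovered vertices left), then uniformly orders the edges of the resulting random objects to \emph{reconstruct} the initial segments of both processes, and uses an auxiliary-time construction to synchronize them. The substantive issue is then to show that the hitting times agree in auxiliary time (Proposition~\ref{prop:coupling1}), and the danger is that adding a $u$-edge to $G$ may create an ``extra'' copy of $F$ not licensed by any $F$-edge of $H$, which could cover a vertex too early.

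This is where the missing combinatorial lemma lives, and it is not a first/second-moment concentration statement about numbers of $F$-copies per vertex. The needed statement is Lemma~\ref{lemma:lowdegnoextra}: whp no low-degree vertex of $H$ is incident to an extra copy of $F$ in $G$. Its proof reduces (via Lemma~\ref{lemma:extrabound}) to the deterministic estimate of Lemma~\ref{lemma:different_estimate}: for a low-degree vertex $v$ of $H_0\notin\Bone(\pi_+)\cup\Btwo$, one must bound $\sum_{j:\,h_j\notin H_0,\,v\in h_j}(\pi_j^*-p_+^s)$ by $n^{-1/s+o(1)}$. This is proved by decomposing the intersection of the candidate copy $F_j$ with the already-present $u$-graph $G(H_0)$ into connected components and invoking strict $1$-balancedness to gain a strict improvement ($(r-1)s_i < (r_i-1)s$ for at least one component) in the exponent count in \eqref{eq:edge_count} — exactly the place where the $K_r$-specific optimization in \cite{heckel2024hitting} does not generalize and a genuinely new argument is required. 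Finally, the black-box reuse you invoke is made rigorous via Proposition~\ref{prop:F-H_coupling}, a coupling of the random $F$-graph process with the plain random $r$-uniform hypergraph process under which the processes agree up to slightly beyond $T_H$, which lets Propositions~5.1 and~6.1 of \cite{heckel2024hitting} be applied verbatim; your appeal to ``history'' and ``partner-set'' machinery is in the right spirit but you do not supply this bridge, which is what actually makes the black-box step legitimate.
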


Theorem~\ref{theorem:kahnhitting} and Theorem~\ref{theorem:finalcoupling} together imply that vertices not contained in any copy of $F$ are essentially the only obstruction for an $F$-factor in the random graph process.

\begin{corollary} \label{thm:Ffactor_HT}
 Let $F$ be a fixed nice $u$-graph with $r \ge 4$ vertices. If $r \ge 3$ and $n \in r \mathbb Z_+$, then whp $G_\HitG$ contains an $F$-factor.
\end{corollary}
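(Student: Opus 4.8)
The plan is to obtain Corollary~\ref{thm:Ffactor_HT} by pairing the process coupling of Theorem~\ref{theorem:finalcoupling} with Kahn's hitting time theorem (Theorem~\ref{theorem:kahnhitting}). The conceptual bridge is that, under the coupling, an \emph{$F$-factor} in $G_{\HitG}$ is precisely the image of a \emph{perfect matching} of the $F$-graph $H_{\HitH}$, where a perfect matching means a family of $n/r$ pairwise vertex-disjoint $F$-edges whose vertex sets partition $[n]$. Indeed, if $\{h_1,\dots,h_{n/r}\}$ is such a family and the coupling succeeds, then the copies of $F$ sitting on the vertex sets $h_1,\dots,h_{n/r}$ are all present in $G_{\HitG}$, are vertex-disjoint, and cover $[n]$. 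So it suffices to prove that whp $H_{\HitH}$ contains a perfect matching, and then intersect with the whp event of Theorem~\ref{theorem:finalcoupling}.

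To establish the perfect matching in $H_{\HitH}$, I would project the random $F$-graph process onto the ordinary random $r$-uniform hypergraph process. Let $\tilde H_t$ be the simple $r$-graph on $[n]$ whose edges are exactly those $r$-subsets that carry at least one of the first $t$ $F$-edges of $(H_t)_t$. Because every $r$-subset of $[n]$ supports the same number $r!/\aut(F)$ of $F$-edges, an exchangeability argument shows that the order in which the $r$-subsets first become occupied is a uniformly random permutation of all $\binom nr$ of them; hence the sequence of \emph{distinct} states of $(\tilde H_t)_t$ is distributed as an initial segment of the standard random hypergraph process $(H^r_t)_t$. Furthermore, a vertex is isolated in $H_t$ exactly when it is isolated in $\tilde H_t$, and the $F$-edge added at step $\HitH$ must occupy a hitherto unoccupied $r$-subset --- otherwise $\tilde H$ would not change and the last isolated vertex would still be present after step $\HitH$, contradicting the definition of $\HitH$. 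Therefore $\tilde H_{\HitH}$ is the random hypergraph process stopped at its own ``no isolated vertices'' hitting time, i.e.\ it has the law of $H^r_{T^r_H}$. As $r=v(F)\ge 4$ and $n\in r\IZ_+$, Theorem~\ref{theorem:kahnhitting} applies and yields a perfect matching of $\tilde H_{\HitH}$ whp; choosing for each matched $r$-subset one of the $F$-edges lying on it lifts this matching to a perfect matching of $H_{\HitH}$.

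Combining the two whp events --- success of the coupling and existence of a perfect matching of $H_{\HitH}$ --- then gives an $F$-factor of $G_{\HitG}$ whp, as explained in the first paragraph; an intersection of finitely many whp events is whp. The assumption $n\in r\IZ_+$ is precisely what makes a perfect matching (hence an $F$-factor) numerically possible, and the condition $r\ge 3$ in the statement is the one inherited from Theorem~\ref{theorem:kahnhitting}, automatically satisfied here since $r\ge 4$.

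Essentially all the work is carried by Theorems~\ref{theorem:finalcoupling} and~\ref{theorem:kahnhitting}, so I do not anticipate a genuine difficulty in the corollary itself. The one spot that needs a careful (though elementary) argument is the projection step: verifying that the random $F$-graph process really does push forward to a time change of the random hypergraph process --- this uses that all $r$-subsets carry equally many copies of $F$ --- and that the two hitting times are compatible, so that no ``wasted'' $F$-edge (one landing on an already occupied $r$-subset) is inserted at the critical step $\HitH$. Once this is in place, the rest is bookkeeping.
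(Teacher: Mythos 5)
Your proposal is correct and follows essentially the same route as the paper: combine the process coupling of Theorem~\ref{theorem:finalcoupling} with Kahn's hitting-time result (Theorem~\ref{theorem:kahnhitting}) by passing from the $F$-graph process to the unlabeled $r$-uniform hypergraph process, obtaining a perfect matching there, and lifting it back to an $F$-factor of $G_{\HitG}$. The paper itself delegates almost all of this to Proposition~\ref{prop:F-H_coupling} and to \cite[Section~7]{heckel2024hitting} and merely records the resulting chain of couplings; your exchangeability argument --- that because every $r$-set carries exactly $r!/\aut(F)$ copies of $F$, the order in which $r$-sets first become occupied is a uniformly random permutation, and the $F$-edge added at time $T_H$ necessarily lands on a fresh $r$-set --- is a clean, self-contained way to reach the same distributional identity $\tilde H_{T_H}\stackrel{d}{=}H^r_{T^r_H}$ that Proposition~\ref{prop:F-H_coupling} provides via an explicit coupling construction.
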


\subsection{Proof strategy}
While we do build on the ideas from \cite{heckel2024hitting}, we derive a novel bound which will be crucial in the proof for general strictly 1-balanced $u$-graphs. Further, we present a short argument which applies to any $u$-graph $F$ and allows to use the last two parts of the proof in  \cite{heckel2024hitting} in a `black-box' manner. In fact, we obtain all process-related parts in \cite{heckel2024hitting} for general $u$-graphs. 
The details and the general approach are described below.

In Section~\ref{Sec:Stat}, analogously to Section~3 in \cite{heckel2024hitting}, we establish properties of Riordan's ``static'' coupling.
The key result here is that, whp, copies of $F$ in $H_u(n,p)$ that are not present as $F$-edges in $H_F(n,\pi)$ cannot be incident to vertices of ``low'' degree.
For the proof, we devise a new combinatorial argument in Lemma~\ref{lemma:different_estimate} that is a viable alternative to the technical optimization problems in the proofs of Lemma 3.8 and Lemma A.2 in \cite{heckel2024hitting} -- which do not extend beyond the complete $u$-graphs considered therein.
More than that, the ideas behind the new argument extend to all strictly 1-balanced $u$-graphs.

The actual process coupling in \cite{heckel2024hitting} was established in three parts. In Section \ref{Sec:Proc}, we adapt the proof from Section 4 from \cite{heckel2024hitting} to establish the first part of the process coupling. This is unavoidable due to the extension of our considerations from complete graphs $K_r$ to more general $u$-graphs $F$. But since this part of our proof does not rely on any specific property of $F$, it applies to \emph{all} $u$-graphs $F$, provided the availability of a static coupling that embeds $H_F(n,\pi)$ into $H_u(n,p)$ with the properties of Lemmata~\ref{lemma:cpl_bad}-\ref{lemma:lowdegnoextra}.
Then, we use a short argument which allows us to transfer the second and third part from \cite{heckel2024hitting} in a black-box manner. This argument does not only save us from the tedious task of adapting the proofs of these parts, it also again holds for general $u$-graphs. 
 
Finally, in the case of $K_r$-factors, it was necessary to distinguish the cases $r\geq 4$ and $r=3$. Our proofs here generally follow the ``easier'' case $r\geq 4$. The case of triangles is more complicated due to the presence of so-called clean 3-cycles, see \cite{heckel2020random}. Indeed, the class of nice $u$-graphs has been tailored in \cite{riordan2022random} to avoid such complications for the static coupling.
Our results show that the treatment of nice u-graphs is also readily available for the process version of the coupling.

\section{Preliminaries}\label{section:preliminaries}

\subsection{Graphs and hypergraphs}\label{sec:graphs_hypergraphs}
For convenience, we identify hypergraphs with their hyperedge set and $F$-graphs with their $F$-edge set. Given a (labeled multi-)hypergraph $H$, we write $v(H)$, $e(H)$ and $c(H)$ for the number of vertices, hyperedges and connected components of $H$, respectively. 

For $u \geq 2$, a \emph{$u$-graph} $G$ is a $u$-uniform simple hypergraph on the vertex set $[n]$. 
Next, we formally define \emph{nice} $u$-graphs, starting with their properties.
\begin{definition}[{1-balanced, \cite{riordan2022random}}]
 The 1-density of a $u$-graph $F$ with $v(F)>1$ is
 \[
  d_1(F) = \frac{e(F)}{v(F)-1}.
 \]
 Further, $F$ is \emph{1-balanced} if $d_1(F') \leq d_1(F)$ for all $F' \subsetneqq F$ with $v(F')>1$, and \emph{strictly 1-balanced} if this inequality is strict for all $F'\subsetneqq F$ with $v(F')>1$.
\end{definition}
In a certain sense, being $1$-balanced means that the graph $F$ does not contain a subgraph that is denser than itself.
Further details can be found in \cite{janson2000random}.

\begin{definition}[{$k$-connected}]
 A hypergraph $F$ is \emph{$k$-connected} if it has at least $k+1$ vertices and no vertex set $S\subset V(F)$ of size at most $k-1$ whose removal disconnects $F$.
\end{definition}

We are now in the position to define \textit{nice} $u$-graphs.

\begin{definition}[{Nice $u$-graphs, \cite{riordan2022random}}]\label{def:nice}
 A $u$-graph $F$ is \textit{nice} if \emph{(i)} $F$ is strictly $1$-balanced, \emph{(ii)} $F$ is $3$-connected and \emph{(iii)} either $u \geq 3$, or $u=2$ and $F$ cannot be transformed into an isomorphic graph by adding one edge and deleting one edge.
\end{definition}

From here on, fix a nice $u$-graph $F$ on $r$ vertices, note that $r\ge 4$, and let $s=e(F)$.
For the convenience of the reader, we repeat the definition of $F$-graphs.

\begin{definition}[$F$-graph, \cite{riordan2022random}]\label{def:F-graph}
 An \textit{$F$-graph} $H_F$ is a labeled multi-hypergraph $(V,E)$, where $V$ is a finite set, the labels are copies of $F$ with vertices in $V$ and the hyperedges (or $F$-edges) $h_F\in E$ are the vertex sets of their labels $F$.
\end{definition}

Let $\aut(F)$ be the number of automorphisms of $F$. Recall that the total numbers $M$ of $F$-edges and $N$ of $u$-edges are given by
\[
 M=\binom{n}{r}\frac{r!}{\mathrm{aut}(F)} \qquad \text{ and } \qquad N=\binom{n}{u}.
\]
For a $u$-graph $G$ with vertex set $V$, we can encode the copies of $F$ in $G$ 
by an $F$-graph $H$, by using exactly the copies of $F$ in $G$ as labels for $H$. We denote this $F$-graph by $\cl(G)$. 

Conversely, we will work with several objects derived from $F$-graphs, which will have a prominent role in the remainder.
Recall that the $F$-edge labels of $H$ are copies of $F$.
\begin{definition}[From $F$-graphs to (multi-)hypergraphs]\label{def:f-to-hyper}
 For an $F$-graph $H_F$, let
 \begin{itemize}
  \item $\tilde{H}_F$ be the multi-hypergraph obtained from $H_F$ by replacing each $F$-edge by a hyperedge with the same vertex set (i.e. forgetting the labels),
  \item $\hat{H}_F$ be the simple hypergraph obtained from $\tilde H_F$ by removing multiple hyperedges,
  \item $G(H_F)$ be the $u$-graph union of the hyperedge labels of $H_F$.
 \end{itemize}
\end{definition}

Finally, we recall the random $F$-graph. 
\begin{definition}[Random $F$-graph]
 For $n \geq 1$ and $\pi \in [0,1]$, the random $F$-graph $H_F(n, \pi)$ has vertex set $[n]$ and each of the $M$ possible $F$-edges (`copies of $F$') is present independently with probability $\pi$. 
\end{definition}

\subsection{Critical window for labeled hypergraphs}\label{section:critwin}
In this subsection, we describe the critical  window in which the last isolated vertex joins an $F$-edge in the random $F$-graph $H_F=H_F(n,\pi)$. 
First, we determine the distribution of the simple hypergraph $\hat H_F$ from Definition~\ref{def:f-to-hyper}.  

\begin{lemma}[\cite{riordan2022random}]\label{lem:pi'}
 Let $H_F = H_F(n, \pi)$. Then $\hat H_F$ is distributed like $H_r(n, \pi')$, where
 \[
  \pi' = 1 - (1-\pi)^{r!/\mathrm{aut}(F)}.
 \]
\end{lemma}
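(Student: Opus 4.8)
The plan is to argue one $r$-vertex set at a time. Fix an $r$-element subset $A\subseteq[n]$ and count the $F$-edge labels whose vertex set is $A$: by Definition~\ref{def:F-graph} a copy of $F$ on $A$ is the image of a bijection $V(F)\to A$, and two bijections yield the same copy precisely when they differ by an automorphism of $F$. Hence there are exactly $k:=r!/\aut(F)$ distinct copies of $F$ with vertex set $A$. (Summing over the $\binom nr$ choices of $A$ recovers the total count $M=\binom nr\,r!/\aut(F)$, as it must.)

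Next I would translate Definition~\ref{def:f-to-hyper} into these terms: $A$ is a hyperedge of $\hat H_F$ if and only if at least one of the $k$ copies of $F$ on $A$ is present in $H_F=H_F(n,\pi)$. Since each of these $k$ copies is present independently with probability $\pi$, the probability that none of them is present is $(1-\pi)^{k}$, so
\[
 \bP\bigl(A\in E(\hat H_F)\bigr)=1-(1-\pi)^{r!/\aut(F)}=\pi'.
\]
Finally I would check independence across vertex sets: for distinct $r$-subsets $A\neq A'$, a copy of $F$ on $A$ and a copy of $F$ on $A'$ are different $F$-edges (their vertex sets differ), so the family of $F$-edges with vertex set $A$ is disjoint from the family with vertex set $A'$. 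As all $M$ possible $F$-edges of $H_F(n,\pi)$ are present independently, the events $\{A\in E(\hat H_F)\}$, indexed by the $r$-subsets $A$ of $[n]$, are mutually independent. Combined with the probability computation above, this shows $\hat H_F$ has exactly the distribution of $H_r(n,\pi')$.

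There is no genuine obstacle here; the statement is essentially a bookkeeping consequence of the definitions. The only two points that require a moment of care are the combinatorial identity that each $r$-subset carries exactly $r!/\aut(F)$ labels — this is where the automorphism count enters — and the remark that labels on distinct vertex sets form disjoint, hence independent, families of $F$-edges, which is what upgrades the per-set computation to an identity in distribution.
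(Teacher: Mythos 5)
Your proof is correct and takes essentially the same approach as the paper, which states only in one line that each hyperedge of $\hat H_F$ is present independently with probability $1-(1-\pi)^{r!/\aut(F)}$. You have simply made explicit the two points the paper leaves implicit: that each $r$-subset carries exactly $r!/\aut(F)$ $F$-edge labels, and that labels on distinct vertex sets form disjoint families, giving independence.
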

\begin{proof}
 Each hyperedge in $\hat{H}_F$ is present independently with probability $1 - (1-\pi)^{r!/\mathrm{aut}(F)}$.
\end{proof}

For the remainder of this note, we fix an arbitrary function $g(n)$ satisfying
\begin{equation} \label{eq:defg}
 g(n) = o\big(\log n/\log \log n \big)\quad  \text{ and } \quad g(n) \rightarrow \infty.
\end{equation}
The probabilities corresponding to the start and end of the critical window in $H_F(n,\pi)$ are
\begin{align}
 \pi_{\pm} = \frac{\mathrm{aut}(F)}{r!} \cdot \frac{\log n \pm g(n)}{\binom{n-1}{r-1}}.
\end{align}

Recall from \cite[Lemma~5.1a]{devlin2017} that, whp, $H_r(n, \pi')$ has isolated vertices for $\pi' \leq \pi_-$ and that, whp, $H_r(n,\pi')$ has no isolated vertices for $\pi' \geq\pi_+$.
It follows from Lemma~\ref{lem:pi'} that, whp, $H_F(n,\pi)$ has isolated vertices for $\pi \leq \pi_-$, and that, whp, $H_F(n,\pi)$ has no isolated vertices for $\pi \geq \pi_+$. This recovers the critical window for the $F$-graph.
Now, let $\delta(F)>0$ and $\varepsilon(F)>0$ as provided by Theorem~\ref{theorem:riordancoupling} and Remark~\ref{rem:constants}, and
\begin{align}
 p_{\pm} = (\pi_{\pm}/(1-n^{-\delta}))^{1/s}.
\end{align}
Note that $p_\pm$ defines a critical window for the disappearance of the last vertex not contained in a copy of $F$, which is easily obtained from Theorem 18 and the proof of Theorem 10 in \cite{riordan2022random}.

\subsection{Bad events}\label{subsec:bad}
We define five bad events which cause our coupling to fail.
The \emph{nullity} of $H$ is
\[
 n(H) = (r-1)e(H)+c(H) - v(H).
\]

\begin{definition}[{Avoidable configuration, \cite{riordan2022random}}]\label{def:avoida}
 Let $H$ be a (labeled multi-)hypergraph. Then $S\subseteq H$ is an \emph{avoidable configuration} if $S$ is connected, $n(S)>1$ and $e(S)\le 2^{r+1}$.
\end{definition}

For example, any two distinct hyperedges on the same vertex set form an avoidable configuration of nullity $r-1 \geq 2$.
The following lemma states that  sufficiently sparse random $r$-uniform hypergraphs do not contain avoidable configurations.

\begin{lemma}[{\cite[Lemma 12]{riordan2022random}}] \label{lem:noavoida}
 For each fixed $r\geq 2$, there is an $\varepsilon>0$ with the following property. If $H=H_r(n,\pi)$ with $\pi=\pi(n)\leq n^{-(r-1)+\varepsilon}$, then whp $H$ contains no avoidable configurations.
\end{lemma}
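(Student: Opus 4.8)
The plan is to prove Lemma~\ref{lem:noavoida} via a first moment (union bound) over all possible avoidable configurations, controlling the count of subhypergraphs by their number of vertices and edges and exploiting the fact that large nullity forces many edges relative to vertices, which makes the probability exponentially small in the chosen power of $n$.

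First I would fix $H = H_r(n,\pi)$ with $\pi \le n^{-(r-1)+\varepsilon}$ for an $\varepsilon>0$ to be chosen, and let $S$ range over connected sub-hypergraphs with $e(S) = m$ edges, $v(S) = v$ vertices, and $c(S)=1$, so that $n(S) = (r-1)m + 1 - v$. The constraint $n(S) > 1$ means $v \le (r-1)m - 1$, i.e. $v \le (r-1)m-1$; the constraint $e(S) \le 2^{r+1}$ bounds $m$ by an absolute constant $C_r := 2^{r+1}$. The number of ways to choose the vertex set is at most $\binom{n}{v} \le n^v$, and given the $v$ vertices, the number of ways to choose an ordered list of $m$ hyperedges among them is at most $\binom{v}{r}^m \le v^{rm} \le C_r^{rm} \cdot (r-1)^{rm}$ — a constant depending only on $r$ — since $v \le (r-1)m-1 \le (r-1)C_r$. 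Hence the expected number of avoidable configurations on exactly $(v,m)$ is at most $O_r(1) \cdot n^v \cdot \pi^m$. Then I would sum over all valid $(v,m)$, of which there are only $O_r(1)$ pairs.

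The key inequality is that for a connected $S$ with $n(S) \ge 2$ we have $v \le (r-1)m - 1$, so
\[
 n^v \pi^m \le n^{(r-1)m - 1}\,\pi^m = n^{-1}\big(n^{r-1}\pi\big)^m \le n^{-1}\,n^{\varepsilon m} \le n^{-1+\varepsilon C_r},
\]
using $\pi \le n^{-(r-1)+\varepsilon}$ and $m \le C_r$. Choosing $\varepsilon < 1/(2C_r) = 2^{-(r+2)}$ makes each term $O(n^{-1/2})$, and summing the $O_r(1)$ terms still gives $o(1)$. By Markov's inequality, whp $H$ contains no avoidable configuration. (One subtlety: an avoidable configuration in Definition~\ref{def:avoida} is allowed to be a sub-hypergraph with repeated edges only in the labeled-multigraph setting, but here $H$ is the simple hypergraph $H_r(n,\pi)$, so all edges of $S$ are distinct and the counting above is valid; if one wanted to be cautious one could note that a multi-edge would only increase nullity and the same bound applies after passing to the support.)

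I do not expect a serious obstacle here — this is a standard subcritical first-moment argument. The only point requiring a little care is making the choice of $\varepsilon$ explicit and uniform: it must be small enough that $n^{r-1}\pi \le n^\varepsilon$ dominates, yet the exponent $-1 + \varepsilon\cdot e(S)$ stays negative for \emph{all} admissible edge counts $e(S) \le 2^{r+1}$ simultaneously; taking $\varepsilon = \varepsilon(r)$ strictly less than $2^{-(r+2)}$ suffices. A second minor point is confirming the combinatorial bound $v(S) \le (r-1)e(S) - 1$ from $n(S) > 1$ together with $c(S) = 1$, which is immediate from the definition of nullity, and observing that this is exactly the place where the hypothesis $n(S) > 1$ (rather than just $n(S)\ge 0$) is used — it is what converts the trivial estimate $n^v\pi^m = O(1)$ into a genuinely decaying bound.
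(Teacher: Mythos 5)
Your proof is correct and takes essentially the same route as Riordan's Lemma~12 in \cite{riordan2022random}, to which the paper simply defers: a first-moment union bound over the $O_r(1)$ isomorphism types of small, connected configurations with $n(S)>1$, using $v(S)\le(r-1)e(S)-1$ together with $n^{r-1}\pi\le n^\varepsilon$ to force a negative power of $n$, and picking $\varepsilon$ small relative to the hard cap $e(S)\le 2^{r+1}$. The bookkeeping (choosing $v$ vertices in $\le n^v$ ways, then hyperedges in $O_r(1)$ ways, probability $\pi^m$) and the integer step from $n(S)>1$ to $v\le(r-1)m-1$ are both handled correctly.
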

While avoidable configurations already cause Riordan's static coupling to fail, the following pairs are an additional source of worry in the process coupling.

\begin{definition}[Partner $F$-edges]
 Two $F$-edges in an $F$-graph are \emph{partner $F$-edges} if they share exactly $u$ vertices.
\end{definition}

\begin{remark}\label{remark:avoidableconfigs}
 Note that $F$-edges cannot overlap too much without causing avoidable configurations. So, unless a bad event occurs, the following holds.
 No two $F$-edges in $H_F(n,\pi)$ share more than two vertices.
 Thus, there are no partner $F$-edges for $u>2$. Further, for $u=2$, each $F$-edge has at most one partner $F$-edge, and all pairs of partner $F$-edges are vertex-disjoint. 
\end{remark}

Having introduced all relevant notions, we can finally define the \emph{bad event} $\mathcal B$ as 
\[
 \mathcal{B}= \Bone(\pi_+) \cup \Btwo \cup \Bthree \cup \Bfour \cup \Bfive,
\]
where the individual events on the right-hand side are defined as follows.
\begin{itemize}[leftmargin=7em,labelsep=1.5em]
\item[$\Bone(\pi)$:]  There exists a vertex of degree more than 
$M\pi +\max\left(M\pi,3\log n\right)$.
\item[$\Btwo$:] There exists an {avoidable configuration}.
\item[$\Bthree$:] There are more than $(\log n)^{8 g(n) }$ low-degree vertices, where a low-degree vertex is a vertex of degree  at most $7g(n)$.
\item[$\Bfour$:] There are more than $\log^3 n$ pairs of partner $F$-edges.
\item[$\Bfive$:] There exists an isolated vertex.
\end{itemize}
The next result ensures that the $F$-graphs at the start, the end and well beyond the critical window have the properties required for the proof.

\begin{lemma} \label{lemma:Hnotbad}
 Let $H_F=H_F(n, \pi)$ with $\pi \le n^{1-r+o(1)}$, then we have $H_F \notin \Bone(\pi)\cup\Btwo$ whp.
 Further, we have $H_F \notin\mathcal{B}$ whp for $\pi=\pi_+$.
\end{lemma}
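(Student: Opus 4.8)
The plan is to verify each of the five bad sub-events separately, working at the two relevant regimes: the ``far'' regime $\pi\le n^{1-r+o(1)}$ (for $\Bone,\Btwo$) and the endpoint $\pi=\pi_+$ (for the full $\mathcal B$). For $\Bone(\pi)$, observe that each vertex degree in $H_F(n,\pi)$ is $\Bin(M_v,\pi)$, where $M_v=\binom{n-1}{r-1}r!/\aut(F)$ is the number of $F$-edges through a fixed vertex, with mean $M_v\pi = \Theta(n^{r-1}\pi)$. In both regimes this mean is $n^{o(1)}$, in particular $o(\log n)$; a Chernoff bound gives that each vertex has degree at most $3\log n$ except with probability $n^{-\omega(1)}$, and a union bound over the $n$ vertices finishes it (noting $M\pi+\max(M\pi,3\log n)\ge 3\log n$, so this threshold is what matters). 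For $\Btwo$, I would invoke Lemma~\ref{lem:noavoida} after passing to $\hat H_F$: by Lemma~\ref{lem:pi'}, $\hat H_F\sim H_r(n,\pi')$ with $\pi' = 1-(1-\pi)^{r!/\aut(F)}\le (r!/\aut(F))\pi = n^{1-r+o(1)}$, which for large $n$ is below the threshold $n^{-(r-1)+\veps}$ of Lemma~\ref{lem:noavoida} (shrinking $\veps$ if necessary). The one subtlety is that an avoidable configuration in the \emph{labeled multi}-hypergraph $H_F$ need not project to one in the simple hypergraph $\hat H_F$ — e.g. two $F$-edges on the same $r$-set collapse to a single hyperedge; but such a coincidence is itself ruled out whp by a direct first-moment computation ($\binom nr$ choices of vertex set times $O(1)$ label pairs times $\pi^2 = n^{2-2r+o(1)}$, which is $o(1)$), and once no two $F$-edges share all $r$ vertices, connectedness and nullity are preserved under the projection, so $\Btwo$ transfers from $\hat H_F$ to $H_F$.

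For the remaining three events we only need $\pi=\pi_+$, where $M\pi_+ = \log n + g(n) = (1+o(1))\log n$ by the definition of $\pi_\pm$ and $M$. For $\Bfive$, the statement that $H_F(n,\pi_+)$ whp has no isolated vertex is exactly the conclusion recorded just before Section~\ref{subsec:bad}, obtained from \cite[Lemma~5.1a]{devlin2017} via Lemma~\ref{lem:pi'} (a vertex is isolated in $H_F$ iff it is isolated in $\hat H_F$). For $\Bthree$, the number of low-degree vertices (degree $\le 7g(n)$) has expectation $n\cdot\bP[\Bin(M_v,\pi_+)\le 7g(n)]$. Since $M_v\pi_+ = \log n + g(n)$, a standard lower-tail estimate for the Poisson-like Binomial gives $\bP[\Bin(M_v,\pi_+)\le 7g(n)] \le n^{-1+o(1)}(\log n)^{7g(n)}$ roughly, so the expected count is $(\log n)^{7g(n)+o(g(n))}$; with $g(n)\to\infty$ this is $\le (\log n)^{7.5 g(n)}$ for large $n$, and Markov's inequality against the threshold $(\log n)^{8g(n)}$ closes the gap, since $(\log n)^{7.5g(n)}/(\log n)^{8g(n)} = (\log n)^{-0.5g(n)}\to 0$. (The generous slack between $7g(n)$, $7.5g(n)$, $8g(n)$ is there precisely to absorb lower-order terms; I would not optimize constants.) For $\Bfour$, the expected number of pairs of partner $F$-edges — $F$-edges sharing exactly $u$ vertices — is at most $\binom nr\binom{r}{u}\binom{n-r}{r-u}\cdot O(1)\cdot \pi_+^2 = O(n^{2r-u}\pi_+^2)$; since $u=2$ is the only case with partners (Remark~\ref{remark:avoidableconfigs}) and $\pi_+ = \Theta(n^{1-r}\log n)$, this is $O(n^{-u+2}(\log n)^2) = O((\log n)^2) = o(\log^3 n)$, and Markov finishes it.

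\textbf{Main obstacle.} The genuinely non-routine point is the bookkeeping for $\Btwo$: one must be careful that ``avoidable configuration'' is defined for the labeled multi-hypergraph $H_F$, whereas the available tool (Lemma~\ref{lem:noavoida}) lives on the simple hypergraph $H_r(n,\pi')$. The argument needs the observation that, whp, no two present $F$-edges occupy the same or a near-identical vertex set (a first-moment bound), so that the label-forgetting and multiplicity-removing projection $H_F\mapsto\hat H_F$ is, on the relevant event, injective on edges and hence preserves $v$, $e$, $c$ and thus nullity and the avoidable-configuration property. Everything else is a Chernoff/Markov first-moment computation tuned so that the exponents $n^{o(1)}$ versus $\log n$ separate cleanly, and the constant slack built into the definitions of $\Bone$ and $\Bthree$ (the factors $3$ and $8$) is wide enough that no delicate optimization is required.
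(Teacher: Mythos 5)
Your approach is correct but takes a genuinely different route from the paper. The paper's proof of this lemma is a two-line reduction: it first observes that whp $\tilde H_F = \hat H_F$ (no two $F$-edges on the same vertex set — essentially the first-moment bound you give), so that all the relevant structural properties transfer to $\hat H_F \sim H_r(n,\pi')$ via Lemma~\ref{lem:pi'}, and then cites the corresponding $r$-uniform statement from the earlier paper, \cite[Lemma~2.7]{heckel2024hitting}. You instead re-derive all five bounds from scratch, which is more work but has the advantage of being self-contained; in particular your careful handling of the label-forgetting, multiplicity-removing projection $H_F\mapsto\hat H_F$ for $\Btwo$ spells out a step the paper handles very tersely. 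The Chernoff / Markov / first-moment computations you carry out are of course the same ones that ultimately sit under \cite[Lemma~2.7]{heckel2024hitting}, so the two proofs are compatible in substance — yours just exposes the calculation rather than invoking the citation.

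One small inaccuracy in your treatment of $\Bone$: the claim that the per-vertex mean degree is ``$n^{o(1)}$, in particular $o(\log n)$'' is not correct — $n^{o(1)}$ includes $\Theta(\log n)$, and indeed at $\pi=\pi_+$ the per-vertex mean degree is $\log n + g(n)$. Consequently $\bP[\deg(v) > 3\log n]$ is only polynomially small (roughly $n^{2-3\log 3}\approx n^{-1.3}$), not $n^{-\omega(1)}$; the union bound over $n$ vertices still closes because $3\log 3 - 2 > 1$, but for general $\pi\le n^{1-r+o(1)}$ the crude lower bound $3\log n$ on the threshold is not by itself enough when the per-vertex mean lands between $\log n$ and $3\log n$, and one should compare against the actual threshold $M\pi + \max(M\pi,3\log n)$. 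This is a matter of bookkeeping (you explicitly flag that you are not optimizing constants) rather than a conceptual gap, but it should be tidied up before the argument is final.
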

\begin{proof}
 By Lemma~\ref{lem:noavoida}, we have $\tilde H_F(n,\pi)=\hat H_F(n,\pi)$ whp as $H_F$ does not contain multiple $F$-edges on the same vertex set. Since $\hat H$ is distributed as $H_r(n,\pi')$  by Lemma~\ref{lem:pi'}, the assertion follows from \cite[Lemma 2.7]{heckel2024hitting}.
\end{proof}

\section{Static coupling}\label{Sec:Stat}

In this section, we review Riordan's coupling as well as the properties that are relevant for our setting. In Subection~\ref{Sec:Riordan}, we recap Riordan's coupling, as it will form the basis of the process coupling in Section \ref{Sec:Proc}. In Lemma \ref{lemma:lowdegnoextra} in Subsection \ref{Sec:Lemmalow}, we establish that the copies of $F$ in $G=H_u(n,p)$ which are not reflected in $H=H_F(n,\pi)$ whp do not contain vertices that are of low degree in $H$.  
It is in this place where we use a novel combinatorial argument to facilitate the proof.

\subsection{Riordan's coupling} \label{Sec:Riordan}
Fix an arbitrary order $h_1, \ldots, h_M$ of all $F$-edges. We couple $G$ and $H$ by going through the $F$-edges in this order: In step $j$ we reveal whether or not $h_j \in H$, as well as \textit{some} information about $A_j$, the event that the corresponding copy of $F$ is in $G$.

\begin{calgorithm}
 For each $j$ from $1$ to $M$:
 \begin{itemize}
    \item Calculate $\pi_j$, the conditional probability of $A_j$ given all information revealed so far.
    \item If $\pi_j \geq \pi$, toss a coin which lands heads with probability $\pi/\pi_j$, independently of everything else. If the coin lands heads, then test whether $A_j$ holds. Include the $F$-edge $h_j$ in $H$ if and only if the coin lands heads and $A_j$ holds.
    \item If $\pi_j < \pi$, then toss a coin which lands heads with probability $\pi$, independently of everything else, and declare $h_j$ present in $H$ if and only if the coin lands heads.
    In any case, do not test whether $A_j$ holds. If the coin lands heads for any $j$, we say that the coupling has failed. 
 \end{itemize}
 After going through steps $1, \ldots, M$, we have decided on all $F$-edges of $H$ by generating the answers 'yes'/'no' for each $h_j$; and we have revealed information on the events $A_j$ by generating answers 'yes'/'no'/'$*$', where $*$ means that we did not decide $A_j$. Now choose $G$ conditional on the tested events $A_i$, that is, where the answer was not $*$. 
\end{calgorithm}

By \cite{riordan2022random}, the algorithm generates the correct distributions of G and H. There, also the following result was established.
\begin{lemma} \label{lemma:cpl_bad}
 For $p, \pi$ as in Theorem \ref{theorem:riordancoupling}, we have $H\in \mathcal B_1(\pi)\cup \mathcal B_2$ if the coupling fails.
\end{lemma}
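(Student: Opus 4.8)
The statement asserts that whenever the Coupling Algorithm fails, the random $F$-graph $H$ must contain a bad configuration, namely $H \in \mathcal B_1(\pi) \cup \mathcal B_2$. The plan is to trace through the only place where failure can occur and show that failure forces either a vertex of excessively high degree (witnessing $\mathcal B_1(\pi)$) or an avoidable configuration (witnessing $\mathcal B_2$). Failure happens at step $j$ precisely when $\pi_j < \pi$ and the associated $\pi$-coin lands heads, so $h_j$ gets included in $H$. The heart of the matter, which is the content of Riordan's analysis in \cite{riordan2022random}, is to understand \emph{why} the conditional probability $\pi_j$ of the event $A_j$ (that the copy of $F$ on $h_j$ sits inside $G=H_u(n,p)$) can drop below the target $\pi = (1-n^{-\delta})p^s$.

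First I would recall the unconditioned value: $\mathbb P(A_j) = p^{e(F)} = p^s$, since $A_j$ asks for $s$ specific $u$-edges to be present in $G=H_u(n,p)$, each independently with probability $p$. Since $\pi = (1-n^{-\delta})p^s < p^s = \mathbb P(A_j)$, a conditioning event is needed to push $\pi_j$ below $\pi$, and such conditioning comes only from the previously \emph{tested} events $A_i$ (those with answer `yes' or `no', i.e.\ $i<j$ with $\pi_i \ge \pi$ at the time). A `yes' answer on a copy $h_i$ sharing $u$-edges with $h_i$ only \emph{increases} the conditional probability that the shared $u$-edges are present, so it cannot bring $\pi_j$ down; it is the `no' answers — copies of $F$ that were tested and found absent — that can decrease $\pi_j$. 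If $h_i$ and $h_j$ share an edge in their $F$-labels, learning $A_i$ failed makes it (slightly) less likely that the shared $u$-edge is present, hence less likely that $A_j$ holds. One then quantifies: for $\pi_j$ to fall below $(1-n^{-\delta})p^s$, vertex $h_j$ must be involved in \emph{many} such overlapping tested-and-failed copies — enough negative conditioning to produce a multiplicative $n^{-\delta}$ deficit.

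The key combinatorial dichotomy, which I would extract from \cite{riordan2022random}, is then: either (a) the overlaps among these copies are ``spread out'' over many distinct vertices of $H$, in which case one of those vertices ends up with degree exceeding $M\pi + \max(M\pi, 3\log n)$ in $H$, giving $\mathcal B_1(\pi)$; or (b) the copies overlap each other heavily, i.e.\ are concentrated on few vertices and share many edges, in which case the union of a bounded number of them forms a connected sub-(multi-)hypergraph of nullity $>1$ and at most $2^{r+1}$ hyperedges — an avoidable configuration, giving $\mathcal B_2$. The bound $2^{r+1}$ in Definition \ref{def:avoida} is precisely calibrated so that boundedly many ($O(1)$, depending on $r$ and $\delta$) overlapping $F$-edges suffice to realize the required probability deficit while staying under the edge budget. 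I expect the main obstacle to be making this dichotomy quantitatively tight: carefully bounding how much each `no'-conditioning can lower $\pi_j$ (a second-moment / inclusion–exclusion estimate over the shared $u$-edges), and then arguing that accumulating a deficit of order $n^{-\delta}$ with a \emph{bounded} number of overlapping copies is impossible unless those copies are so entangled that their union is an avoidable configuration — while if unboundedly many copies are needed, some vertex must have high $H$-degree. Since this is exactly the content of the cited result in \cite{riordan2022random}, the proof here reduces to invoking that analysis; I would simply cite \cite[proof of Theorem 18]{riordan2022random} and note that the failure event there is recorded as exactly $H \in \mathcal B_1(\pi) \cup \mathcal B_2$.
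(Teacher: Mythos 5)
Your proposal reaches the same conclusion as the paper: the lemma is established simply by citing the proof of Theorem 18 in \cite{riordan2022random}, where the failure event of the coupling is explicitly recorded as $H \in \mathcal B_1(\pi) \cup \mathcal B_2$. The paper does not attempt to reproduce Riordan's internal mechanism; your sketch of the dichotomy (diffuse overlaps giving a high-degree vertex versus concentrated overlaps giving an avoidable configuration) is a plausible reconstruction of that analysis but is not part of, nor needed for, the paper's own proof.
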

\begin{proof}
 The assertion is explicitly established in the proof of Theorem 18 in \cite{riordan2022random}.
\end{proof}
 As Riordan’s proof of Theorem~\ref{theorem:riordancoupling} shows, for nice $H\notin \mathcal B_1(\pi)$ we have $\pi \leq \pi_j $ for all conditional probabilities $\pi_j$ except when $\pi_j$ becomes $0$. In other words:
\begin{lemma}\label{lemma:pi0}
 For $p, \pi$ as in Theorem \ref{theorem:riordancoupling} we have $\pi_j=0$ if $\pi_j < \pi$, unless $H \notin \mathcal B_1(\pi)$. 
\end{lemma}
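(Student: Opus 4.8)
\textbf{Proof plan for Lemma~\ref{lemma:pi0}.}
The plan is to trace carefully through the structure of the Coupling Algorithm and to invoke the dichotomy that Riordan's proof of Theorem~\ref{theorem:riordancoupling} (as recorded just above in the discussion preceding this lemma) establishes for the conditional probabilities $\pi_j$. Concretely: at step $j$ the algorithm has revealed the status of $h_1,\dots,h_{j-1}$ in $H$ together with (partial) information on the events $A_1,\dots,A_{j-1}$, and $\pi_j$ is the conditional probability of $A_j$ given all of this. The claim is that, whenever $\pi_j<\pi$, in fact $\pi_j=0$ --- \emph{unless} the revealed $F$-graph $H$ lies in $\mathcal B_1(\pi)$. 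So the proof is really an extraction-from-the-literature argument: I would point to the place in \cite{riordan2022random} where Riordan shows that, for $p,\pi$ as in Theorem~\ref{theorem:riordancoupling}, the conditional probability $\pi_j$ can only drop below $\pi$ by collapsing all the way to $0$, and that the only way this fails is the presence of a high-degree vertex (the event $\mathcal B_1(\pi)$).

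First I would recall the mechanism that makes $\pi_j$ move: conditioning on which earlier $F$-edges are present (and on the tested $A_i$) can only \emph{decrease} the conditional probability that the copy of $F$ on $h_j$ appears in $G$, because each such conditioning either forbids certain $u$-edges of $G$ or is neutral; it never forces extra $u$-edges to be absent in a way that helps $h_j$. Thus $\pi_j$ starts at roughly $p^{s}\approx\pi/(1-n^{-\delta})>\pi$ and can only be pushed down. Second, I would explain why a \emph{small} decrease is impossible: the edge set of the copy of $F$ on $h_j$ that remains ``undecided'' is determined by how many of the $s$ edges of $F$ on $h_j$ have already been ruled out by previous conditioning; if none is ruled out, $\pi_j$ is essentially $p^{s}\ge\pi$; if at least one is ruled out, then $\pi_j=0$, since a single forbidden $u$-edge of the copy makes $A_j$ impossible. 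The subtle point --- and this is where niceness and strict $1$-balancedness of $F$ enter, exactly as in Riordan's analysis --- is that a $u$-edge of the copy on $h_j$ can only have been forbidden by an earlier step if some earlier $F$-edge $h_i$ overlapped $h_j$ in enough vertices, and controlling the cumulative effect of such overlaps is what requires $H\notin\mathcal B_1(\pi)$ (bounding vertex degrees) together with the absence of avoidable configurations baked into the setting.

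The key steps, in order, would therefore be: (1) write $\pi_j$ as a product over the still-undecided $u$-edges of the copy of $F$ on $h_j$, using independence of the $u$-edges of $G$ conditional on the revealed information (this is exactly the structure Riordan exploits); (2) observe that if \emph{every} edge of the $F$-copy is still undecided then $\pi_j \ge p^{s} \ge \pi$, contradicting $\pi_j<\pi$; (3) observe that if \emph{some} edge of the $F$-copy has been decided negatively in an earlier step, then $A_j$ is outright impossible, so $\pi_j = 0$; (4) handle the remaining possibility, that some edge of the $F$-copy has been decided \emph{positively} (i.e.\ is forced present) but none negatively --- in this case $\pi_j$ is a product of fewer factors of $p$ and hence \emph{larger}, still $\ge p^{s}\ge\pi$, so again no contradiction; (5) conclude that the only way to reach $\pi_j<\pi$ is case (3), giving $\pi_j=0$, and identify the one exceptional regime (a vertex whose degree in the revealed $H$ is so large that the bookkeeping in step (1)--(4) breaks down) with the event $\mathcal B_1(\pi)$, citing the corresponding estimate in \cite{riordan2022random}.

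The main obstacle I anticipate is step (4)/(5): making precise \emph{why} a partially-forced $F$-copy cannot have $\pi_j$ strictly between $0$ and $\pi$, and in particular pinning down exactly which failure mode is absorbed into $\mathcal B_1(\pi)$. This is not a genuinely new computation --- it is contained in Riordan's proof of Theorem~\ref{theorem:riordancoupling} --- but stating it cleanly for nice $F$ (rather than just $K_r$) requires one to check that the relevant bound in \cite{riordan2022random} is indeed phrased for the full class of nice $u$-graphs, which the statement of Theorem~\ref{theorem:riordancoupling} guarantees. Accordingly, the proof will be short: quote Riordan's argument for the dichotomy ``$\pi_j\ge\pi$ or $\pi_j=0$ unless $H\in\mathcal B_1(\pi)$'', and note that this is precisely the assertion of the lemma.
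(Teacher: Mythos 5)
The paper gives no explicit proof of Lemma~\ref{lemma:pi0}: the sentence preceding the lemma is the entire justification, namely a citation to Riordan's argument. So your high-level plan (extract the dichotomy from \cite{riordan2022random}) is the same as the paper's. However, the mechanism you sketch rests on a misreading of the coupling algorithm, and the case analysis in your steps (2)--(4) would not survive scrutiny.

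Concretely, in Riordan's coupling the only information ever revealed about $G$ is a yes/no answer to a \emph{whole event} $A_i$ (the entire copy of $F$ on $h_i$ is present/absent), never the presence or absence of an individual $u$-edge; the algorithm records only ``yes''/``no''/$*$ per $A_i$ and then samples $G$ conditional on the tested events. There is therefore no partition of the $u$-edges of $F_j$ into ``undecided / decided-present / decided-absent'' of the kind your argument requires. Your step (2) (``if no edge of $F_j$ is decided then $\pi_j \geq p^s$'') is false: conditioning on $\neg A_i$ for a previously tested $i$ whose copy $F_i$ shares edges with $F_j$ pushes $\pi_j$ strictly below $p^s$ even though no $u$-edge of $F_j$ has been individually determined --- already two copies of $K_4$ sharing a triangle give $\pi_j = p^6(1-p^3)/(1-p^6) < p^6$. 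Your step (3) (``a $u$-edge of the copy decided absent forces $\pi_j=0$'') invokes information the algorithm never reveals. What actually drives the dichotomy is a \emph{cumulative} estimate: positively tested $A_i$'s can only increase $\pi_j$, each negatively tested $A_i$ that shares edges with $F_j$ multiplies $\pi_j$ by a factor of the form $(1-p^a)/(1-p^b)$ slightly below $1$, and it is $\mathcal B_1(\pi)$ (bounded vertex degree in $H$) that caps the number of such overlapping $A_i$'s so that the total drift stays under $n^{-\delta}$; the escape case $\pi_j=0$ arises only when the conditioning becomes logically inconsistent with $A_j$ via a union of several tested copies, not via a single forbidden $u$-edge.
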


\subsection{Extra copies near low-degree vertices} \label{Sec:Lemmalow}

The aim of this subsection is to provide an analysis of the ``extra'' copies of $F$ in the $u$-graph $G$, that is, the copies which are not represented by $F$-edges in $H$. This leads to the following lemma, which will later imply (in Section~\ref{Sec:ProcFH})  that whp the extra copies in $G$ have no influence on realizing the hitting time.

\begin{lemma}\label{lemma:lowdegnoextra}
 Let $G=H_u(n,p_+)$ and $H=H_F(n,\pi_+)$ be coupled via Riordan's coupling.
 A copy of $F$ in $G$ is an \emph{extra copy} if the corresponding $F$-edge in $H$ is not present.
 Whp, no vertex which is low-degree in $H$ is incident with an extra copy of $F$ in $G$. 
\end{lemma}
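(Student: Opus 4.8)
The plan is to perform a union bound over all potential extra copies of $F$ in $G=H_u(n,p_+)$ that are incident to a low-degree vertex of $H=H_F(n,\pi_+)$, and show this expected number tends to $0$. Fix a vertex $v$ and condition on $v$ being low-degree in $H$, i.e. having degree at most $7g(n)$. By Lemma~\ref{lemma:cpl_bad} and Lemma~\ref{lemma:Hnotbad}, we may assume $H \notin \mathcal{B}_1(\pi_+)\cup\mathcal{B}_2$, so the coupling does not fail and there are no avoidable configurations; in particular (Remark~\ref{remark:avoidableconfigs}) distinct $F$-edges of $H$ meet in at most two vertices. First I would fix a copy $F_0$ of $F$ in $G$ on a vertex set $W \ni v$ whose corresponding $F$-edge is \emph{not} present in $H$, and analyze the probability that all $s$ $u$-edges of $F_0$ are present in $G$ while $h_{F_0}\notin H$. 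By Lemma~\ref{lemma:pi0}, since $H\notin\mathcal{B}_1(\pi_+)$, whenever the conditional probability $\pi_j$ of the copy $A_j$ in step $j$ satisfies $\pi_j<\pi_+$ we in fact have $\pi_j=0$; so an extra copy $F_0$ in $G$ corresponds to a step where either the $\pi/\pi_j$-coin landed tails (probability $1-\pi_+/\pi_j \le 1$) or the copy was decided ``*'' but later realized in $G$. The crucial point is that each $u$-edge of $F_0$ contributes a factor of roughly $p_+$ to the probability $F_0 \subseteq G$, but we must be careful about $u$-edges shared with the (few) $F$-edges of $H$ incident to $v$.

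\textbf{The combinatorial core.} The heart of the argument is the counting: I would split the sum over candidate extra copies $F_0 \ni v$ according to $j := v(F_0 \cap (\text{rest of the structure already forced near } v))$ — more precisely, according to how many vertices and $u$-edges of $F_0$ are ``free'' versus already pinned down by the bounded neighborhood of $v$ in $H$. Since $v$ is low-degree, it lies in at most $7g(n)$ $F$-edges of $H$, which together span $O(g(n))$ vertices; call this set $U$. If $F_0$ shares at least two vertices with some $F$-edge of $H$ other than through $v$, one typically creates an avoidable configuration, which is excluded. So generically $F_0$ has all but $O(1)$ of its vertices outside $U$, and I would bound the number of ways to choose the remaining $\le r$ ``new'' vertices by $n^{r-1}$ (one vertex is fixed to be $v$), and the probability that the corresponding $u$-edges are present by $p_+^{s'}$ where $s'$ is the number of $u$-edges of $F_0$ not already implied by edges of $G(H)$ near $v$. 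This is exactly where Lemma~\ref{lemma:different_estimate} (the new combinatorial estimate advertised in the proof strategy) enters: strict $1$-balancedness of $F$ guarantees that whenever we remove a proper subgraph of $F$ (the part glued to $U$), the number of vertices saved is more than $1/d_1$ times the number of $u$-edges saved, so that $n^{r-1-a} p_+^{s-b}$ is still decreasing in $n$ for every admissible $(a,b)$ coming from a proper overlap. Using $p_+ = n^{-1/d_1 + o(1)}$ (so $n^{r-1} p_+^{s} = n^{o(1)}$, borderline) together with the strictly-$1$-balanced gain from any overlap with $U$, and then multiplying by $n$ for the choice of $v$ and by $|U|^{O(1)} = (\log n)^{O(1)}$ for the choice of the overlap pattern, the total expectation is $n^{1+o(1)} \cdot n^{-\gamma}$ for some $\gamma>0$ coming from the strict inequality, hence $o(1)$, provided $\varepsilon(F)$ in $p_+ \le n^{-1/d_1+\varepsilon}$ is chosen small enough relative to this gain.

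\textbf{Handling the coupling's conditioning and the main obstacle.} One subtlety is that the events ``$F_0 \subseteq G$'' and ``$h_{F_0}\notin H$'' are not independent and are entangled with all the coins and tested events of the Coupling Algorithm; I would handle this by noting that, conditionally on the history up to step $j_0$ (the step of $h_{F_0}$), the event $F_0\subseteq G$ has probability at most $\pi_{j_0}$ when $\pi_{j_0}\ge\pi_+$ (in which case $h_{F_0}$ is present whenever the copy is tested and holds, so an extra copy forces the $\pi_+/\pi_{j_0}$-coin to be tails, contributing $\pi_{j_0}(1-\pi_+/\pi_{j_0}) = \pi_{j_0}-\pi_+ \le \pi_+ \cdot n^{-\delta}/(1-n^{-\delta})$, a gain of $n^{-\delta}$!) and at most $\pi_{j_0}=0$ otherwise by Lemma~\ref{lemma:pi0}. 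So in fact every extra copy comes with a free $n^{-\delta}$ factor on top of the $p_+^{s}$ from its edges; combined with the union bound over $\le n \cdot n^{r-1} \cdot (\log n)^{O(1)}$ candidates and $n^{r-1}p_+^s = n^{\varepsilon s}$, the total is $n^{1+\varepsilon s + o(1) - \delta}$, which is $o(1)$ once $\varepsilon$ is small compared to $\delta/(s+1)$. The main obstacle I anticipate is the bookkeeping when $F_0$ overlaps the low-degree neighborhood $U$ of $v$ in more than the single vertex $v$: one must rule out (via no avoidable configurations) the truly dangerous overlaps, and for the remaining mild overlaps invoke strict $1$-balancedness quantitatively — this is precisely the role of Lemma~\ref{lemma:different_estimate}, replacing the ad hoc optimization of \cite[Lemma 3.8, Lemma A.2]{heckel2024hitting}. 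Getting the exponents to line up uniformly over all overlap patterns, while only spending a $(\log n)^{O(g(n))}$ factor (harmless against any fixed power of $n$), is the delicate part.
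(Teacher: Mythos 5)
Your high-level plan is recognizably the same as the paper's: bound the expected number of extra-copy incidences at low-degree vertices, isolate the $n^{-\delta}$ gain coming from $\pi_+ = (1-n^{-\delta})p_+^{s}$, and deal with overlaps via a strictly-$1$-balanced counting argument in the spirit of Lemma~\ref{lemma:different_estimate}. However, there are gaps in how the two gains are combined and in the accounting over vertices, and these would make the argument as written fail.

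First, you claim that for \emph{every} extra copy, $\pi_{j_0}-\pi_+ \le \pi_+ \cdot n^{-\delta}/(1-n^{-\delta})$, i.e.\ a ``free'' $n^{-\delta}$ factor. That inequality is only valid when $\pi_{j_0}$ equals $p_+^{s}$, i.e.\ when $F_0$ shares no $u$-edge with the already-forced graph $G_0=G(H_0)$. When $F_0$ does share $u$-edges with $G_0$, $\pi_{j_0}$ (resp.\ $\pi_j^{*}$) can be as large as $p_+^{\,s-s'}$ for $s'\ge 1$, which dwarfs $\pi_+ n^{-\delta}$. This is precisely the case that your combinatorial core is supposed to handle, and the paper handles it by the explicit decomposition
$\pi_j^{*}-\pi_+=(\pi_j^{*}-p_+^{s})+(p_+^{s}-\pi_+)$, treating the two summands by disjoint arguments (Lemma~\ref{lemma:different_estimate} for the first, the elementary $n^{-\delta}$ bound for the second). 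In your write-up the two gains are claimed simultaneously, which is not correct, and the final exponent bookkeeping reflects that confusion.

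Second, the union bound is off by a factor of about $n$: you multiply by $n$ ``for the choice of $v$'' and by $(\log n)^{O(1)}$ for the overlap pattern, but you do not account for the event that $v$ is actually low-degree, which has probability $n^{-1+o(1)}$. Equivalently, the correct multiplicity is the \emph{number of low-degree vertices in $H$}, which is at most $(\log n)^{8g(n)}=n^{o(1)}$ off the bad event $\mathcal B_3$. Without this, your final quantity $n^{1+o(1)-\delta}$ is not $o(1)$ for the small constant $\delta(F)$. The paper sidesteps this cleanly by conditioning on $H=H_0\notin\mathcal B$ and summing over the actual low-degree vertices of $H_0$, using Lemma~\ref{lemma:extrabound} to bound $P(A_j\mid H=H_0)$, rather than conditioning on ``$v$ is low-degree'' (which changes the distribution of $H$ in an unwieldy way).

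Third, at the critical probability $p_+$ one has $p_+^{s}=n^{-(r-1)+o(1)}$, so $n^{r-1}p_+^{s}=n^{o(1)}$, not $n^{\varepsilon s}$; the latter would hold only near the top of the range $p\le n^{-1/d_1+\varepsilon}$ admitted by Theorem~\ref{theorem:riordancoupling}, not at the end of the critical window where the lemma is applied. This changes the arithmetic but, once the $n^{-1+o(1)}$ from the low-degree count is included, both the $n^{-\delta}$ term and the Lemma~\ref{lemma:different_estimate} term are $o(1)$ as in the paper.
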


Here, a low-degree vertex is understood in the sense of $\mathcal{B}_3$, defined at the end of Subsection~\ref{subsec:bad}

For the most part, the proof of Lemma~\ref{lemma:lowdegnoextra} uses the same strategy as the proof of the corresponding Lemma~3.1 in \cite{heckel2024hitting} for $r$-cliques with $r\geq 4$.
However, parts of the proof in \cite{heckel2024hitting} do not extend to the nice $u$-graphs considered in this contribution.
Here, we introduce a novel argument that even extends to general strictly $1$-balanced $u$-graphs. Details can be found in the proof of Lemma \ref{lemma:different_estimate} below, specifically around Equation~\eqref{eq:edge_count}.

The first ingredient for the proof of Lemma~\ref{lemma:lowdegnoextra} is Lemma~\ref{lemma:extrabound} below, where we bound the probability that the copy of $F$ corresponding to a given $h_j$ becomes an extra copy of $F$ in $G$. The advantage of the upper bound is that it comes in terms of $H$
only, rather than involving considerations of the coupling history. For the special case $F=K_r$ with $r\geq 4$, Lemma~\ref{lemma:extrabound} is \cite[Lemma~3.5]{heckel2024hitting}.
Subject to Lemmas \ref{lemma:cpl_bad} and \ref{lemma:pi0}, the proof is identical to the one given 
in \cite{heckel2024hitting} and will therefore be omitted. Recall $s=e(F)$ and $\delta=\delta(F)$ from Theorem \ref{theorem:riordancoupling}.

\begin{lemma}\label{lemma:extrabound} 
 Let $p=p_+$, $\pi=\pi_+$ and $H_0 \notin\Bone(\pi) \cup\Btwo$ be an $F$-graph. For any $j \in [M]$ such that $h_j \notin H_0$ set
 \begin{equation}\label{eq:pistardef}
  \pi_j^* = P\big(A_j \mid \bigcap_{i: h_i \in H_0} A_i\big).
 \end{equation}
 Let $G$ and $H$ be distributed via Riordan's coupling.
 Then we can bound the conditional probability that the copy of $F$ corresponding to $h_j$ is an extra copy in $G$ as
 \[
  P\big(A_j \mid H=H_0 \big) \le \frac{\pi_j^*-\pi}{1-\pi}.
 \]
\end{lemma}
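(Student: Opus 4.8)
\textbf{Proof plan for Lemma~\ref{lemma:extrabound}.}

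The plan is to condition on the full outcome $H = H_0$ of the random $F$-graph and analyze, within Riordan's coupling, the conditional probability that $h_j$ becomes an extra copy in $G$. Since $h_j \notin H_0$, the coupling algorithm at step $j$ did one of two things: either $\pi_j \geq \pi$, a coin with heads-probability $\pi/\pi_j$ was tossed, it came up tails (otherwise $h_j$ would be in $H$, as $A_j$ would then have been tested and, conditionally on $H=H_0$ with the coupling not having failed, must have held --- wait, more carefully: if the coin is heads and $A_j$ holds then $h_j \in H$; since $h_j \notin H_0$, either the coin was tails, or the coin was heads and $A_j$ failed), or $\pi_j < \pi$, in which case by Lemma~\ref{lemma:pi0} we have $\pi_j = 0$ (using $H_0 \notin \Bone(\pi)$), so $A_j$ cannot hold at all and $h_j$ is certainly not an extra copy. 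So the only way $h_j$ is an extra copy given $H = H_0$ is: at step $j$ we had $\pi_j \geq \pi$, the coin came up tails (probability $1 - \pi/\pi_j$), and then later, when $G$ is chosen conditionally on all tested events, $A_j$ turns out to hold.

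First I would make precise that the relevant conditional probability factorizes along the history. Writing $\mathcal{H}_{j-1}$ for the information revealed in steps $1,\dots,j-1$, on the event $\{\pi_j \geq \pi\}$ the step-$j$ coin is tails with probability $1-\pi/\pi_j$, and --- crucially --- whether $A_j$ ultimately holds when $G$ is drawn is, conditionally on everything, governed by $\pi_j = P(A_j \mid \mathcal{H}_{j-1})$ together with the later-tested events. The key observation from \cite{heckel2024hitting} is that the tested events $A_i$ that get incorporated into the final choice of $G$ are all of the form "$A_i$ holds" for $i$ with $h_i \in H_0$ (the coin was heads and $A_i$ held), so conditioning on $H = H_0$ and on the coupling not failing, $G$ is chosen conditional on $\bigcap_{i : h_i \in H_0} A_i$ at least as far as the relevant $A_i$'s are concerned. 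Hence $P(A_j \mid H = H_0, \text{tails at } j) \leq P(A_j \mid \bigcap_{i : h_i \in H_0} A_i) = \pi_j^*$, and combining with the tails probability $1 - \pi/\pi_j$, and then observing $\pi_j \geq \pi_j^*$ is not quite what we want --- rather, we bound $(1 - \pi/\pi_j)\cdot(\text{something} \leq \pi_j^*)$. The cleanest route is: $P(A_j, h_j \notin H \mid \mathcal{H}_{j-1}) \leq \pi_j^* - \pi$ on the relevant event (the "$-\pi$" coming from the mass $\pi$ of heads that would have put $h_j$ in $H$), and $P(h_j \notin H \mid \mathcal{H}_{j-1}) \geq 1 - \pi$, whence dividing gives the claim. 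I would reconstruct exactly this chain of inequalities following the $F=K_r$ proof in \cite[Lemma~3.5]{heckel2024hitting} verbatim, since, as the excerpt notes, nothing in that argument uses the clique structure once Lemmata~\ref{lemma:cpl_bad} and \ref{lemma:pi0} are in hand.

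The main obstacle --- which is really a bookkeeping obstacle rather than a conceptual one --- is justifying the replacement of the genuine conditional law (conditioning on $H = H_0$, i.e.\ on the exact set of $F$-edges that appeared, which includes negative information "$h_i \notin H$ for $i \notin$ the index set") by the cleaner conditioning on only the positive events $\bigcap_{i : h_i \in H_0} A_i$. This requires the monotonicity/correlation structure of the events $A_i$ (they are increasing in the underlying $u$-graph $G$) together with the fact that the negative information $h_i \notin H$ for $i$ with small $\pi_i$ is, by Lemma~\ref{lemma:pi0}, either vacuous ($\pi_i = 0$) or carries the "extra" coin-tails information which only helps the bound; and it requires that $H_0 \notin \Bone(\pi) \cup \Btwo$ so that Lemmata~\ref{lemma:cpl_bad}, \ref{lemma:pi0} apply and the coupling does not fail on $H = H_0$. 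Since the paper explicitly states the proof is identical to \cite{heckel2024hitting} and defers it, I would simply cite that argument, noting that it depends on $F$ only through Lemmata~\ref{lemma:cpl_bad} and \ref{lemma:pi0}, both of which have been established above for nice $u$-graphs.
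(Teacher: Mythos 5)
Your overall plan is the same as the paper's: the paper does not re-derive this lemma but states that, once Lemma~\ref{lemma:cpl_bad} and Lemma~\ref{lemma:pi0} are established, the proof of \cite[Lemma~3.5]{heckel2024hitting} carries over verbatim, and you correctly identify that the argument depends on $F$ only through these two lemmas. One caveat on your ``cleanest route'' sketch: the quantities $P(A_j, h_j\notin H\mid \mathcal{H}_{j-1})$ and $P(h_j\notin H\mid \mathcal{H}_{j-1})$ do not divide to give $P(A_j\mid H=H_0)$ --- conditioning on the history and on $h_j\notin H$ is strictly weaker than conditioning on the full outcome $H=H_0$, which also pins down the remaining steps and thus a specific collection of tested events (both the positive ones $A_i$ for $h_i\in H_0$ and the negative ones from heads-with-$A_i$-failed). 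The actual argument has to push the FKG-type comparison through the final conditional law of $G$ given the tested events, which you do flag as the main bookkeeping obstacle; since you ultimately defer to the cited proof for exactly this, the proposal is consistent with the paper's treatment.
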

Lemma~\ref{lemma:extrabound} provides sufficient control over the extra copies to  establish Lemma \ref{lemma:lowdegnoextra}.

\begin{proof}[Proof of Lemma~\ref{lemma:lowdegnoextra}]
 Let $Z$ denote the number of vertices that are low-degree in $H$ and at the same time incident to an extra copy of $F$ in $G$. As $\mathcal B^c$ is a whp event and
 \[
  P(Z \geq 1) \leq P(Z \geq 1, H \notin \mathcal B) + P(H \in \mathcal B) \leq \bE[Z \cdot \mathds{1}\{H \notin \mathcal B\}] +P(H \in \mathcal B), 
 \]
 it is sufficient to show that $\bE[Z \cdot \mathds{1}\{H \notin \mathcal B\}]$ tends to zero.

 Fix $H_0\notin \mc B$ and let $L$ be the low-degree vertices in $H_0$. Conditional on $H=H_0$, the low-degree vertices of $H$ are $L$, but we do not know which copies of $F$ will become extra copies. Writing $f(n)\sim g(n)$ for $f(n)=(1+o(1))g(n)$, we use Lemma~\ref{lemma:extrabound} to obtain 
 \begin{align}
  \bE[Z \mid H=H_0 ] 
  \leq \sum_{v\in L }\sum_{\substack{j: h_j \notin H_0, \\v\in h_j }} \frac{\pi^*_j-\pi_+}{1-\pi_+} 
  \sim\sum_{v\in L }\sum_{\substack{j: h_j \notin H_0, \\v\in h_j }} (\pi^*_j - \pi_+)\nonumber\\
  \sim\sum_{v\in L}\sum_{\substack{j: h_j \notin H_0, \\v\in h_j }} (\pi^*_j-p_+^{s}) +\sum_{v\in L}\sum_{\substack{j: h_j \notin H_0, \\v\in h_j }} (p_+^{s} - \pi_+).\label{eq:exclatv}
 \end{align}
 Observe that the summands $p_+^{s} - \pi_+$  in the second sum depend neither on $H_0$ nor on $h_j$. Indeed, we have $p_+^{s} - \pi_+ = \pi_+ \frac{n^{-\delta}}{1-n^{-\delta}}$. As for any $v$, there are at most $\frac{r}{\aut(F)}n^{r-1}$ $F$-edges $h_j$ that contain $v$, 
 the inner sum is bounded by $O(n^{-\delta}\log n)$. Finally, as $H_0 \notin \mathcal B$, it contains at most $(\log n)^{8 g(n)} = n^{o(1)}$ low-degree vertices, so that overall, the second sum on the right-hand side of \eqref{eq:exclatv} is $o(1)$. 

 Thus, it only remains to bound the first sum on the right-hand side of \eqref{eq:exclatv}. This is done in Lemma \ref{lemma:different_estimate} below, where we show that the inner sum is again bounded by some $n^{-c}$, thus decaying faster than the $n^{o(1)}$ growth of the number of low-degree vertices.
 Altogether, this then proves Lemma~\ref{lemma:lowdegnoextra}.
\end{proof}

\begin{lemma}\label{lemma:different_estimate}
 Suppose $H_0\notin \mc B_1(\pi_+) \cup \mc B_2$. Then for any low-degree vertex $v$ of $H_0$,
 \begin{equation}\label{eq:sumforexclatv}
  \sum_{\substack{j: h_j \notin H_0, \\v\in h_j }}(\pi^*_j-p_+^{s}) \leq n^{-\frac{1}{s}+o(1)}.
 \end{equation}
\end{lemma}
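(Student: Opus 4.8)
The plan is to convert the estimate into a counting problem about copies of $F$ and then bound the relevant count using strict $1$-balancedness together with the absence of avoidable configurations in $H_0$. First I would make $\pi_j^*$ explicit: writing $\mathcal E_0:=E(G(H_0))$ for the $u$-edge set of the union of the labels of $H_0$, the event $\bigcap_{i:h_i\in H_0}A_i$ is precisely the event that every $u$-edge of $\mathcal E_0$ is present in $G=H_u(n,p_+)$, and, since the $u$-edges of $G$ are independent, $\pi_j^*=p_+^{s-k_j}$, where $F'_j$ is the copy of $F$ labelled by $h_j$ and $k_j:=|E(F'_j)\cap\mathcal E_0|$. Hence $\pi_j^*-p_+^s=p_+^{s-k_j}\big(1-p_+^{k_j}\big)\le p_+^{s-k_j}\,\one\{k_j\ge 1\}$. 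Since $p_+^s=\pi_+/(1-n^{-\delta})$ and $\pi_+=\Theta(n^{-(r-1)}\log n)$, we have $p_+=n^{-(r-1)/s+o(1)}$, so a copy with $k_j=k$ contributes $n^{-(r-1)(s-k)/s+o(1)}$ to the sum.

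Next I would group the sum according to the value $k=k_j$. Letting $N_k$ denote the number of copies of $F$ through $v$ whose $F$-edge is not in $H_0$ and which share exactly $k$ $u$-edges with $G(H_0)$, it then suffices (there being only $s=O_F(1)$ values of $k$) to show
\[
 N_k \le n^{[(r-1)(s-k)-1]/s+o(1)}\qquad\text{for }1\le k\le s .
\]
The case $k=s$ asserts $N_s=0$: any copy of $F$ all of whose $u$-edges lie in $G(H_0)$ is one of the labels of $H_0$. I would prove this by noting that otherwise the copy meets $\ell\ge 2$ labels whose union is a connected sub-hypergraph of $\tilde H_0$ on at most $s\le 2^{r+1}$ hyperedges, hence of nullity $\le 1$ because $H_0\notin\mathcal B_2$; a short argument on its block structure, using $3$-connectedness of $F$ (and niceness condition~(iii) when $u=2$, which rules out single-edge-swap copies), then confines the copy to a single label, a contradiction.

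For $1\le k\le s-1$ the bound on $N_k$ is the core of the argument and the place where the new combinatorial input around~\eqref{eq:edge_count} enters. Given a copy $F'\ni v$ with $h_j\notin H_0$ and $K:=E(F')\cap\mathcal E_0$, $|K|=k$, let $F^K\subsetneq F$ be the sub-$u$-graph of $F'$ spanned by $K$, with $w:=v(F^K)$ vertices and no isolated vertices. I would specify $F'$ by: (a) the $O_F(1)$-valued combinatorial type of the inclusion $F^K\hookrightarrow F$; (b) a labelled embedding of $F^K$ into $G(H_0)$; (c) the $r-w$ remaining vertices of $F'$, chosen in $[n]$ in at most $n^{r-w}$ ways. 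For (b) one shows that the number of embeddings is $n^{[s(w-1)-(r-1)k-1]/s+o(1)}$: the edges of $F^K$ at $v$ cost essentially nothing because $v$ is low-degree (only $O(g(n))$ $u$-edges of $G(H_0)$ pass through $v$), while the remaining edges are added one by one and, crucially, since $H_0\notin\mathcal B_2$ the cluster of labels of $H_0$ that $F^K$ visits is almost a tree, so each visited label can be paid for by the new vertices it contributes rather than by the weak per-vertex degree bound coming from $\mathcal B_1$. Combining (a)--(c) then yields $N_k\le n^{(r-1)(s-k)/s-1/s+o(1)}$ as soon as $s(w-1)-(r-1)k\ge 1$, and this last inequality is exactly the strict $1$-balancedness of $F$ applied to the proper sub-$u$-graph $F^K$, namely $e(F^K)(v(F)-1)<e(F)(v(F^K)-1)$, i.e.\ $k(r-1)\le s(w-1)-1$. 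In contrast to the $K_r$-tailored optimization in \cite[Lemmas~3.8 and A.2]{heckel2024hitting}, this uses only strict $1$-balancedness and goes through for every strictly $1$-balanced $F$.

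The hard part is obtaining the embedding count in (b) at the precise strength $n^{[s(w-1)-(r-1)k-1]/s+o(1)}$: the naive ``one factor of $n$ per vertex of $F^K$'' is far too lossy, and one has to balance three ingredients simultaneously — that $v$ is low-degree, so the part of $F^K$ near $v$ is free; that $H_0\notin\mathcal B_2$, so the labels touched by $F^K$ form an almost-tree and their edges can be charged to new vertices; and the strict $1$-balance cap $k(r-1)\le s(w-1)-1$ on how dense a $w$-vertex sub-$u$-graph of $F$ can be. This balancing is exactly the estimate recorded around~\eqref{eq:edge_count}, and it is the ingredient that, unlike the $K_r$-specific bounds, extends to the full class of strictly $1$-balanced $u$-graphs.
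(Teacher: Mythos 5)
Your overall strategy matches the paper's: split the sum according to how many $u$-edges of the copy $F'_j$ already lie in $G(H_0)$, reduce to a counting problem, and pay for the count against powers of $p_+$ using strict $1$-balancedness. The $k=s$ case (your $N_s=0$) is exactly the paper's appeal to \cite[Lemma~19]{riordan2022random} in the presence of $\mc B_2^c$, and the computation of $\pi_j^*$ is the same. However, there is a genuine gap in the core step.

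You parametrize by $(k,w)$ where $k=e(F^K)$ and $w=v(F^K)$, and you claim the embedding count in (b) is $n^{[s(w-1)-(r-1)k-1]/s+o(1)}$, invoking strict $1$-balancedness \emph{globally} for $F^K$, i.e.\ $k(r-1)\le s(w-1)-1$. This is not enough. If $F^K$ has $z'$ connected components, the number of embeddings with $v$ anchoring one component is $n^{z'-1+o(1)}$ (one vertex per extra component times an $n^{o(1)}$ exploration factor), and for this to be $\le n^{[s(w-1)-(r-1)k-1]/s+o(1)}$ you need $s(w-z')\ge (r-1)k+1$, i.e.\ strict $1$-balancedness applied \emph{to each connected component separately} and summed, not to $F^K$ as a whole. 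The global inequality you cite only gives $s(w-1)\ge (r-1)k+1$, which is strictly weaker as soon as $z'\ge 2$, and then the exponent in your (b) bound can be smaller than $z'-1$. The paper avoids this trap by parametrizing directly by the number of components $z$ of $S=(h_j,E_j\cap E(G_0))$, applying $1$-balancedness to each component $(r-1)s_i\le (r_i-1)s$ with strictness for at least one $i$, and summing — see the display around \eqref{eq:edge_count}.

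Two further points. First, the "almost-a-tree / each visited label pays for itself via $\mc B_2$" argument you sketch for (b) is vague and unnecessary: the paper establishes the $n^{z-1+o(1)}$ embedding count by the elementary observation that $\gD(G_0)\le (r-1)\gD(H_0)=n^{o(1)}$ from $\mc B_1(\pi_+)$, which you dismiss as "weak" but which is in fact exactly what is needed. Second, when $v\notin F^K$ you should be more careful in (c): then $v$ is fixed among the $r-w$ free vertices, and $F^K$ is not anchored, so the $n^{z'+o(1)}\cdot n^{r-w-1}$ product still matches $n^{z'-1+r-w+o(1)}$, but this is worth making explicit.
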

\begin{proof}
 Let $G_0=G(H_0)$ be the $u$-graph from Definition \ref{def:f-to-hyper}. The conditioning in the definition of $\pi^*_j$ in \eqref{eq:pistardef} now means conditioning on all edges of $G_0$ being present. 

 Let $v$ be a low-degree vertex and $h_j \notin H_0$ such that $v \in h_j$.
 Further, let $F_j$ be the copy of $F$ corresponding to the $F$-edge $h_j$, and let $E_j$ be the edges of $F_j$. Observe that
 \begin{align*}
  \pi^*_j - p_+^{s} 
  = \bP\Big(A_j\Big|\bigcap_{i:h_i\in H_0} A_i\Big) - p_+^{s}
  = p_+^{|E_j\setminus E(G_0)|} - p_+^{s}
  \leq p_+^{|E_j\setminus E(G_0)|}.
 \end{align*}
 In the following, we may assume that $|E_j\setminus E(G_0)|>0$, otherwise $h_j\notin H_0$ exists as a copy of $F$ in $G_0$, which would only be possible if $H_0$ contains an avoidable configuration by \cite[Lemma~19]{riordan2022random} (since $F$ is nice).
 The second case that we may exclude, is the other extreme where $|E_j\setminus E(G_0)|=s$, as the actual contribution of such terms to \eqref{eq:exclatv} is 0. 
    
 For all other cases, consider the $u$-graph $S=(h_j,E_j\cap E(G_0))$. It has $z$ connected components, say, where component $i \in [z]$ has $r_i$ vertices and $s_i$ $u$-edges, and $r_1+\dots+r_z=r$. Also, any connected component of $S$ is a subgraph of $F_j$, hence $s_i\leq d_1(F)(r_i-1)$, or equivalently $(r-1)s_i\leq (r_i-1)s$. 
 Moreover, we have excluded the two cases where $r_1=\dots=r_z=1$, which corresponds to  $|E_j\setminus E(G_0)|=s$, and $S=F_j$, which corresponds to $|E_j \setminus E(G_0)| = 0$.
 Thus, $(r-1)s_i < (r_i-1)s$ for at least one $i$,
 and we obtain (for all relevant cases)
 \begin{align}\label{eq:edge_count}
  (r-1)|E_j\setminus E(G_0)| 
  &= (r-1)\left(s - \sum_{i=1}^z s_i\right) \geq (r-1)\left(s - d_1(F) \sum_{i=1}^z (r_i-1)\right) + 1 \nonumber \\
  &= (r-1)\left(s - d_1(F)(r-z)\right) + 1.
 \end{align}
 Dividing both sides of \eqref{eq:edge_count} by $(r-1)$ yields
 \[
  |E_j\setminus E(G_0)|\geq s - d_1(F)(r-z) + \frac{1}{r-1}.
 \]
 Observe that the last bound only depends on the number $z$ of components of $S$. For each $z \in [r-1]$, we finally upper bound the number of potential $h_j$ that contain $v$ and give rise to $z$ components within $G_0$. For this, observe that the maximum degree in $G_0$ is $\gD(G_0)\leq(r-1)\gD(H_0) = n^{o(1)}$, where we use $\gD$ to denote the maximum degree.
 Next, note that the number of possible vertex sets for the component containing $v$ is bounded by $(r(u-1)\gD(G_0))^{r-1}=n^{o(1)}$. 
 Indeed, the $(k+1)$-th vertex needs to be chosen among the at most $ k(u-1)\gD(G_0)$ neighbours to the previously chosen $k$ vertices. For each of the remaining $z-1$ components, we have an additional choice of at most $n$ vertices for the initial vertex and the bound from above for the anchored component. Altogether, there are thus at most $n^{z-1+o(1)}$ possibilities for the choice of $h_j$ such that $v\in h_j$ and the corresponding $S$ has $z$ components.
 We conclude that the total contribution to the sum in \eqref{eq:sumforexclatv} coming from all $j$ that yield $z$ components is at most 
 \begin{align*}
  n^{z-1+o(1)} p_+^{s-d_1(F)(r-z)+\frac{1}{r-1}}
  &\leq n^{z-1+o(1) + \left(-\frac{1}{d_1(F)}+o(1)\right)\left(s-d_1(F)(r-z)+\frac{1}{r-1}\right)}
  \leq n^{-\frac{1}{s}+o(1)}.
 \end{align*}
 Finally, there are less than $r$ different values for $z$; thus we obtain \eqref{eq:sumforexclatv}.
\end{proof}

\section{Process coupling} \label{Sec:Proc}

We now turn to the dynamic part, namely the coupling of the random $u$-graph process $(G_t)_{t=0}^N$ and the random $F$-graph process $(H_t)_{t=0}^M$. While the first part of the ``3-step-approach'' from \cite{heckel2024hitting} requires (minor) $F$-specific adaptations that are based on Section \ref{Sec:Stat},
parts two and three (Section \ref{Sec:Proc2}) can be used in a ``black box''-manner, harnessing a coupling of the simple, unlabeled random hypergraph process and the random $F$-graph process $(H_t)_{t=0}^M$ as described in Section \ref{Sec:ProcFH}.

\subsection{Coupling the \textit{F}-graph process and the random hypergraph process} \label{Sec:ProcFH}
The following proposition establishes a coupling between the initial parts of the random $F$-graph process and the random hypergraph process.
This simplifies the proof of Proposition \ref{prop:coupling1} below, but much more than that, this allows to directly access the results in \cite{heckel2024hitting} for the remainder, rather than reiterating the proofs.
Let
\[
 T_{E} = \mathrm{min}\{ t : E_t \text{ has no isolated vertices}\}
\]
be the hitting time of having no isolated vertices left in a simple, unlabeled random hypergraph process $(E_t)_{t\geq 0}$.

\begin{proposition}\label{prop:F-H_coupling}
 We may couple the random $F$-graph process $(H_{t})_{t=0}^{M}$ and the random hypergraph process $(E_t)_{t=0}^{\binom{n}{r}}$ such that whp both of the following two properties hold:
 \begin{itemize}
  \item[(a)] The hitting times at which the last isolated vertex disappears agree: $T_{H} = T_E$.
  \item[(b)] The unlabeled processes agree until time $T_H+\lfloor g(n)n\rfloor$, i.e.
  \[
   (\tilde H_{t})_{t=0}^{T_{H} + \lfloor g(n) n\rfloor} = (E_t)_{t=0}^{T_{H} + \lfloor g(n)n\rfloor}.
  \]
 \end{itemize}
\end{proposition}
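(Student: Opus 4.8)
The plan is to couple the two processes by a single sequence of uniformly random draws and to show that, before the hitting time $T_H$ plus a polynomial slack, the $F$-graph process never adds a second $F$-edge on a vertex set that already carries one, so that the unlabeled projection $\tilde H_t$ is exactly a random hypergraph process. First I would set up the coupling: reveal the $F$-edges $h_1, h_2, \dots$ of $H$ in uniformly random order; whenever $h_{t}$ lies on a vertex set $V(h_t)$ that has not yet appeared among $h_1, \dots, h_{t-1}$, declare the corresponding hyperedge of $E$ to be drawn at this step; whenever $V(h_t)$ has already appeared, $E$ makes no new draw and we must re-index. Concretely, let $\sigma$ be the random permutation of all $M$ $F$-edges used by $H$, and let $E$ follow the induced order on the underlying $r$-sets, skipping repeats. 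The point is that the induced order on $r$-sets is again uniformly random (the first appearance of each $r$-set among a uniform ordering of the labeled copies is a uniform ordering of the $r$-sets), so $(E_t)$ really is a random hypergraph process.

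The key quantity to control is the number of \emph{collisions} up to a given time, i.e. steps $t$ at which $V(h_t)$ has already been used. If there are no collisions up to step $T$, then $\tilde H_T = E_T$ as labeled-vs-unlabeled sequences, and in particular the isolated-vertex structure of $\tilde H_t$ and $E_t$ agrees for all $t \le T$, giving both (a) and (b) on that range. So it suffices to show that whp no collision occurs before time $T_H + \lfloor g(n) n \rfloor$. For this I would first bound $T_H$: by the critical-window discussion in Section~\ref{section:critwin}, whp $T_H \le (1+o(1)) \frac{\aut(F)}{r!} \binom{n-1}{r-1}^{-1}(\log n) \cdot M = (1+o(1)) \frac{n \log n}{r}$, since $M\pi_+$ is of order $\log n$ and $T_H$ corresponds to roughly $M\pi_+$ many $F$-edges; hence $T_H + \lfloor g(n) n \rfloor = n^{1+o(1)}$, well inside the range where Lemma~\ref{lemma:Hnotbad} applies with $\pi = n^{1-r+o(1)}$. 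Now a collision before time $t$ is exactly the event that the first $t$ $F$-edges of $H$ contain two $F$-edges on the same $r$-set — but two such $F$-edges form an avoidable configuration (as noted right after Definition~\ref{def:avoida}, nullity $r-1 \ge 2$), so by Lemma~\ref{lemma:Hnotbad} the $F$-graph consisting of the first $n^{1+o(1)}$ $F$-edges is whp free of avoidable configurations, hence collision-free. This is the crux of the argument.

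To make the last step rigorous I would phrase it via $H_F(n,\pi)$ rather than the process: choose $\pi$ slightly above the $p_0$-window so that whp $H_F(n,\pi)$ has more than $T_H + \lfloor g(n)n \rfloor$ $F$-edges (possible since the number of $F$-edges in $H_F(n,\pi)$ concentrates around $M\pi$ and $T_H + \lfloor g(n)n\rfloor = n^{1+o(1)}$ while $M\pi$ can be taken to be $n^{1+o(1)}$ with $\pi = n^{1-r+o(1)}$), couple the process $(H_t)$ to be the $F$-edges of $H_F(n,\pi)$ in a uniform order, and apply Lemma~\ref{lemma:Hnotbad} to conclude $\tilde H_F(n,\pi) = \hat H_F(n,\pi)$ whp, i.e. no two $F$-edges share a vertex set. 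On this event the process is collision-free up to time $T_H + \lfloor g(n) n\rfloor$, which yields (a) and (b). The main obstacle I anticipate is the bookkeeping needed to verify that the induced ordering of $r$-sets obtained by deleting repeats from a uniform ordering of the $F$-edges is genuinely the random hypergraph process (equivalently, that the skipping procedure does not bias which $r$-set appears next); once the collision-free event is in force this is essentially automatic, but it should be stated carefully, e.g. by exhibiting the explicit bijection between orderings or by a direct exchangeability argument.
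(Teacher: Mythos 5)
Your proposal is correct and follows essentially the same route as the paper: a static $H_F(n,\pi)$ with $\pi$ a constant multiple of $\pi_+$ whp contains no avoidable configurations, hence no two $F$-edges on a common vertex set, so the unlabeled $F$-graph coincides with a random $r$-uniform hypergraph, and the number of its $F$-edges whp exceeds $T_H + \lfloor g(n)n\rfloor$. The paper sidesteps the exchangeability bookkeeping you flag at the end by uniformly ordering the $F$-edges of the static instance $H_F(n,2\pi_+)$ to obtain only the \emph{initial} segments of both processes and then extending them independently, rather than deriving $(E_t)$ by deleting repeats from a full uniform ordering of all $M$ $F$-edges; this is a cosmetic rather than a substantive difference.
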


\begin{proof}
 Let $E=\hat H_F$ be the simple hypergraph that is obtained from $H_F=H_F(n, 2\pi_+)$ by label and multi-hyperedge removal (see Definition \ref{def:f-to-hyper}). Recall from Lemma \ref{lem:pi'} that $E$ is distributed like $H_r(n,\pi')$ with $\pi'=1-(1-2\pi_+)^{r!/\aut(F)}.$ 
 By Lemma \ref{lem:noavoida}, whp $H_F$ contains no avoidable configurations and thus in particular no multi-hyperedges. So, whp, $\tilde H_F = E$ and $v(H_F) = v(E)$.

 We then order the hyperedges of both hypergraphs uniformly to obtain the initial parts $(H_{t})_{t=0}^{v(H_F)}$ and $(E_t)_{t=0}^{v(E)}$ of the processes, and add $F$-edges and hyperedges independently for both processes thereafter to obtain the full processes. It remains to show that, whp, $T_H=T_E$.
 Note that $T_F \leq M\pi_+$ whp and that $v(H_F)$ is distributed like $\Bin(M,2\pi_+)$, so we have $v(H_F) \geq T_H + \lfloor g(n) n\rfloor$ whp.
 But this yields the result since the processes match in this part and the property of a vertex being isolated does not depend on the hyperedge label, meaning the choice of the label given its vertex set.
\end{proof}

Note that we can directly apply the results in \cite{heckel2024hitting} to $(E_t)_t$, and Proposition \ref{prop:F-H_coupling} ensures that these directly carry over to the relevant part of $(\tilde H_t)_t$. Then, as just discussed in the proof, we observe that the choice of the labels does not affect the relevant properties, say, a vertex being isolated or a graph containing a perfect matching.

\subsection{Process coupling -- step 1} \label{Sec:Proc1}
We turn to the adjustments required to establish the coupling of the random $u$-graph process $(G_t)_{t=0}^N$ and the random $F$-graph process $(H_t)_{t=0}^M$.
The following result combines Propositions 4.1, 4.2 and 8.6 in \cite{heckel2024hitting}, and in particular reflects the observation that whp no partner $F$-edges exist for $u>2$ (see introduction to Section 8 in \cite{riordan2022random}).

\begin{proposition}\label{prop:coupling1}
 We may couple the random $u$-graph process $(G_t)_{t=0}^N$ and the random $F$-graph process $(H_t)_{t=0}^M$ so that the following holds.
 Let $\mathcal E=H_\HitH\setminus\cl(G_\HitG)$ and let $\mathcal F$ be the set of $F$-edges in $H_{T_H}$ that have a partner $F$-edge in $H_{\HitH+\left \lfloor g(n)n \right \rfloor}\setminus H_\HitH$. Then we have $\mc E\subset\mc F$ whp.
\end{proposition}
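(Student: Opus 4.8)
The plan is to mirror the argument from Section~4 of \cite{heckel2024hitting}, isolating exactly the place where the complete graph $K_r$ is used and replacing it by the input we have prepared in Section~\ref{Sec:Stat}, most importantly Lemma~\ref{lemma:lowdegnoextra}. First I would set up the coupling: run Riordan's static coupling from Subsection~\ref{Sec:Riordan} at the parameters $p=p_+$, $\pi=\pi_+$ to obtain a joint law of $G=H_u(n,p_+)$ and $H=H_F(n,\pi_+)$, then turn this into a process coupling by revealing the $F$-edges of $H$ in a uniformly random order and simultaneously revealing $u$-edges of $G$, in the same bookkeeping style as \cite{heckel2024hitting}. The target time $T_H$ lies whp inside the critical window (since $\pi_-\le\pi\le\pi_+$ at the hitting time, by the discussion in Subsection~\ref{section:critwin}), so all the ``not bad'' properties of Lemma~\ref{lemma:Hnotbad} are available at and just beyond $T_H$, and in particular $H_{T_H+\lfloor g(n)n\rfloor}$ has only $n^{o(1)}$ low-degree vertices and at most $\log^3 n$ pairs of partner $F$-edges.

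The core step is to analyze an $F$-edge $h\in\mc E$, i.e.\ $h\in H_{T_H}$ whose corresponding copy of $F$ is \emph{not} present in $G_{T_G}$. There are two ways this can happen. Either $h$ was among the $F$-edges for which Riordan's coupling ``failed'' or produced an extra-copy discrepancy already in the static picture --- but Lemma~\ref{lemma:lowdegnoextra} says that whp no low-degree vertex of $H$ is incident to an extra copy of $F$ in $G$, and the first vertex to be covered at time $T_H$ (together with the vertices that join it) is low-degree, so this cannot be the reason $h\in\mc E$ for the relevant $h$; one argues, as in \cite{heckel2024hitting}, that the only $F$-edges in $H_{T_H}$ whose loss could matter are those incident to low-degree vertices. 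The remaining possibility is that $h$'s copy of $F$ in $G$ is destroyed by the process mechanics, and this is precisely where a partner $F$-edge enters: when $u=2$, an $F$-edge $h$ and a partner $h'$ share exactly $u=2$ vertices, hence one edge, and the coupling can be forced to ``swap'' which of the two copies of $F$ is realized in $G$ --- so $h$'s copy is absent in $G_{T_G}$ only because its partner $h'$ appeared later, i.e.\ $h'\in H_{T_H+\lfloor g(n)n\rfloor}\setminus H_{T_H}$, giving $h\in\mc F$. For $u>2$, Remark~\ref{remark:avoidableconfigs} (via $\Btwo$) rules out partner $F$-edges altogether, so $\mc F=\emptyset$ and one must show $\mc E=\emptyset$ whp directly, which again follows from Lemma~\ref{lemma:lowdegnoextra} combined with the fact that failures of the static coupling are confined to $\Bone\cup\Btwo$ (Lemma~\ref{lemma:cpl_bad}), events avoided whp at $\pi_+$ by Lemma~\ref{lemma:Hnotbad}.

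Concretely I would argue the containment $\mc E\subset\mc F$ as follows: fix $H_0\notin\mc B$, condition on $H_{T_H}=H_0$, and for each $h\in H_0$ incident to a low-degree vertex, show that conditionally on the realized $G$, if $h\in\mc E$ then $h$ must have acquired a partner $F$-edge in the window of length $\lfloor g(n)n\rfloor$ after $T_H$; $F$-edges not incident to any low-degree vertex cannot lie in $\mc E$ by Lemma~\ref{lemma:lowdegnoextra} applied at $\pi_+$ (here we use that the unlabeled process beyond $T_H$ behaves like $H_r(n,\pi')$ up to time $T_H+\lfloor g(n)n\rfloor$ via Proposition~\ref{prop:F-H_coupling}, so the degree structure and the extra-copy structure are controlled by the static coupling at $\pi_+$). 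Summing the failure probabilities over the at most $n^{o(1)}$ low-degree vertices and the $F$-edges through them --- exactly the estimate packaged in Lemma~\ref{lemma:different_estimate} --- shows the bad event has probability $o(1)$.

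The main obstacle I anticipate is the bookkeeping needed to make ``the coupling destroys $h$'s copy only through a partner'' precise: one has to track, along the process coupling, which tested events $A_j$ have been revealed and argue that once the copy of $F$ on $h$ is realized in $G$ it stays realized unless a genuinely conflicting later $F$-edge (necessarily a partner, since larger overlaps are avoidable configurations by Remark~\ref{remark:avoidableconfigs}) is inserted --- this is the step that in \cite{heckel2024hitting} occupies the bulk of Section~4 and Proposition~8.6, and where the $u=2$ versus $u>2$ dichotomy is handled. Everything else is a translation of their argument, with $K_r$-specific combinatorics replaced by the niceness of $F$ and the new Lemma~\ref{lemma:different_estimate}.
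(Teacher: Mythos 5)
Your high-level outline matches the paper's: run Riordan's static coupling at $(p_+,\pi_+)$, upgrade it to a process coupling following \cite{heckel2024hitting}, invoke Lemma~\ref{lemma:lowdegnoextra} as the $F$-specific input, and handle the $u=2$ versus $u>2$ dichotomy via partner $F$-edges and Remark~\ref{remark:avoidableconfigs}. You also correctly flag that Proposition~\ref{prop:F-H_coupling} is what lets one reuse the hypergraph-process lemmas from \cite{heckel2024hitting}.

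However, there is a conceptual gap in your ``core step'' that would not survive being made precise. You analyze an individual $h\in\mc E$ and split into two failure modes, treating ``extra copies'' and ``$F$-edges of $H_{T_H}$ whose copy is absent from $G_{T_G}$'' as if they were the same phenomenon. They are not: an \emph{extra copy} is a copy of $F$ in $G$ without a corresponding $F$-edge in $H$ (the direction ruled out by the static coupling succeeding, i.e.\ by Lemma~\ref{lemma:cpl_bad} and $H\notin\Bone\cup\Btwo$), whereas $\mc E$ consists of $F$-edges of $H_{T_H}$ whose copies have not yet appeared in $G$ at time $T_G$. Once the static coupling succeeds, no copy is ever ``destroyed'' and there is no extra-copy discrepancy to rule out for $h\in\mc E$; what has to be controlled is purely a \emph{timing} mismatch between when an $F$-edge enters $H_t$ and when its copy of $F$ is completed in $G_t$. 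The paper resolves this not by bounding per-edge failure probabilities over $h\in\mc E$, but by the auxiliary-time construction with dummy $u$-edges: one shows $t_H=t_G$ in auxiliary time, and for $u=2$ observes that only partner $F$-edges can pull a dummy edge into the later copy, so $\mc E\subseteq\mc F$ follows deterministically once $t_H=t_G$. Your sketch has no analogue of the $t_H=t_G$ step, and without it the claim $\mc E\subseteq\mc F$ does not follow from Lemma~\ref{lemma:lowdegnoextra} alone.

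Relatedly, the role of Lemma~\ref{lemma:lowdegnoextra} is more specific than you indicate: it is used to establish part (a) of the $t_H=t_G$ argument, namely that at the auxiliary time $t_-$ the isolated vertices of $H_-$ (which are low-degree in $H_F$) are still $F$-isolated in $G_-$, because any copy of $F$ in $G_-$ covering such a vertex would have to be an extra copy. It is not applied directly at $T_H$ to rule out membership in $\mc E$. Also, Lemma~\ref{lemma:different_estimate} is a subroutine inside the proof of Lemma~\ref{lemma:lowdegnoextra} and is not invoked again in the proof of Proposition~\ref{prop:coupling1}; the final union bound you describe over low-degree vertices is not how the paper closes this proposition.
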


\begin{proof}
 We follow \cite[Section 4]{heckel2024hitting} and focus on the relevant modifications, rather than reiterating the technical details of the proof.
 First, we apply Riordan's coupling to $H_F=H(n, \pi_+)$ and $G=G(n,p_+)$.
 Subsequently, we uniformly order the $F$-edges of $H_F$ and the $u$-edges of $G$ to obtain the initial parts of the processes. This is sufficient as whp $H_F$ contains no isolated vertices and in $G$, every vertex is contained in a copy of $F$. Moreover, we have $|H_F| - T_H  < g(n)n$ whp.

 As in \cite[Section 4.1.1]{heckel2024hitting}, we construct a $u$-edge order after the introduction of additional ``dummy'' $u$-edges for $F$-edges in $H_F$ that overlap in at least $u$ vertices:  For $u=2$, the dummy edges are the shared edges of partner $F$-edges in $H_F$. For $u>2$, whp $H_F$ contains no $F$-edges that overlap in at least $u$ vertices (see Remark \ref{remark:avoidableconfigs} and Section 8.3 in \cite{heckel2024hitting}),
 so there is no need to consider dummy $u$-edges in the first place. Once the set of dummy $u$-edges is determined, the order again arises from the introduction of auxiliary times in $[0,1]$, that not only couple the orders, but also put the $F$-graph process and the $u$-graph process on the same time scale.
 The existence of these auxiliary times and the induced order rests solely on the result that $H \notin \mathcal B$ whp -- which implies that Riordan's coupling succeeds.

 The main remaining step is to show that in auxiliary time, the time $t_H \in [0,1]$ at which the last isolated vertex in the $F$-graph process disappears coincides with the time $t_G \in [0,1]$ at which the last \emph{$F$-isolated} vertex, that is, the last vertex not contained in a copy of $F$, disappears in the random $u$-graph process (\cite[Section 4.1.3]{heckel2024hitting}). 
 For this part, we choose an auxiliary time $t_{-}$ such that the $F$-graph process at time $t_-$, say $H_-$, is essentially the $F$-graph at the beginning of the ``critical window''. In particular $H_-$, has isolated vertices whp. To show that $t_G=t_H$ whp, it is sufficient to establish that whp the following two properties hold:
 \begin{itemize}
  \item[(a)] At auxiliary time $t_-$, the set of isolated vertices in the $F$-graph process agrees with the set of $F$-isolated vertices in the $u$-graph process.
  \item[(b)] All isolated vertices of $H_-$ get their first $F$-edge in the $F$-graph process at the same auxiliary time as they get their first copy of $F$ in the $u$-graph process.
 \end{itemize}
 In essence, the main steps for these observations rely on certain hypergraph properties from \cite{heckel2024hitting} and their $F$-graph versions as provided by Proposition \ref{prop:F-H_coupling}. More specifically, let $G_-$ denote the random $u$-graph process at auxiliary time $t_-$. To argue that no isolated vertex in $H_-$ is $F$-isolated in $G_-$, we observe that whp any isolated vertex in $H_-$ is a low-degree vertex in $H_F$ (\cite[Lemma 4.5]{heckel2024hitting} together with Proposition \ref{prop:F-H_coupling}). 
 According to Lemma \ref{lemma:lowdegnoextra}, no such low-degree vertex is incident to an extra copy of $F$ in $G$.
 Moreover, by \cite[Lemma 2.9]{heckel2024hitting} and Proposition \ref{prop:F-H_coupling}, whp none of these $F$-edges is a partner $F$-edge.
 This shows that whp, for $t\ge t_-$, the isolated vertices in $H_t$ are $F$-isolated in $G_t$.
 To argue that $G_-$ does not contain any additional $F$-isolated vertices, we observe that whp all non-isolated vertices in $H_-$ are incident with at least one non-partner $F$-edge (\cite[Lemma 4.6]{heckel2024hitting} together with Proposition \ref{prop:F-H_coupling}). As this non-partner $F$-edge appears at the same auxiliary time in the $F$-graph process as its copy of $F$ in the $u$-graph process, this implies that any non-isolated vertex of $H_-$ is covered by a copy of $F$ in $G_-$. 

 Due to the whp absence of partner $F$-edges for $u>2$, the discussion above yields $\mc E=\mc F=\emptyset$.
 For $u=2$, the pairs of partner $F$-edges emerge ``earlier'' in the $(H_t)_t$ than their counterparts in $(G_t)_t$ in the auxiliary time construction. 
 Thus, using $t_H=t_G$ whp from above yields $\mc E\subset\mc F$.
 The details for the coupling property can be found in \cite[Section 4.1.2]{heckel2024hitting}, and
 \cite[Section 4.1.4]{heckel2024hitting} explains how $\mc E\subset\mc F$ is derived.
\end{proof}

\subsection{Process coupling - steps 2 \& 3}\label{Sec:Proc2}
Now, we use Proposition \ref{prop:F-H_coupling} to obtain a very short proof for the remaining process coupling steps in \cite{heckel2024hitting}.
\begin{proposition}\label{prop:couplingrandomset}
 There exists a coupling of $(H_t)_t$ and $\mc F$ from Proposition~\ref{prop:coupling1} to another instance $H'_{T_{H'}}$ of the stopped random $F$-graph process so that, whp, 
 \[
  H_\HitH \setminus \mc{F} \supseteq H'_{T_{H'}}.
 \]
\end{proposition}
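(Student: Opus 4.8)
The plan is to reduce the statement about $F$-graph processes to a statement about simple hypergraph processes, where the corresponding result from \cite{heckel2024hitting} can be invoked as a black box. Concretely, I would first apply Proposition~\ref{prop:F-H_coupling} to couple $(H_t)_t$ with a simple random hypergraph process $(E_t)_t$ so that, whp, $T_H = T_E$ and $(\tilde H_t)_{t} = (E_t)_{t}$ up to time $T_H + \lfloor g(n)n\rfloor$. In that regime the labels carry no information for the property at hand: being an isolated vertex, and hence the hitting time, depends only on the unlabeled hypergraph. The set $\mathcal F$ of Proposition~\ref{prop:coupling1} lives inside $H_{T_H}$, hence inside the coupled window, so $\tilde{\mathcal F}$ is a well-defined subset of $E_{T_E}$ (for $u>2$ it is empty, and for $u=2$ each member has its partner appearing within $\lfloor g(n)n\rfloor$ further steps, again inside the window).

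Next I would quote the second/third step of \cite{heckel2024hitting} — the analogue of Proposition~\ref{prop:couplingrandomset} for the \emph{simple} random hypergraph process — which yields a coupling of $(E_t)_t$ and $\tilde{\mathcal F}$ to an independent stopped random hypergraph process $(E'_t)_{t=0}^{T_{E'}}$ with $E_{T_E}\setminus\tilde{\mathcal F}\supseteq E'_{T_{E'}}$ whp. Here $\tilde{\mathcal F}$ plays exactly the role of the ``partner-edge'' set in \cite{heckel2024hitting}, which is why the unlabeled version applies verbatim; one only has to check that the bound $|\mathcal F|\le \log^3 n$ (guaranteed outside $\mathcal B_4$, via Lemma~\ref{lemma:Hnotbad} and Proposition~\ref{prop:F-H_coupling}) matches the hypothesis used there. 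Having obtained $E'_{T_{E'}}$, I would lift it back to an $F$-graph process $H'$ by independently choosing a uniformly random copy of $F$ on each hyperedge vertex set, in the order the hyperedges appear; since the hyperedges of $E'$ form (whp) an avoidable-configuration-free hypergraph, this produces a genuine instance of the stopped random $F$-graph process, and $T_{H'}=T_{E'}$.

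It then remains to transport the inclusion through the delabeling/relabeling. On the whp event that all of the couplings succeed and no avoidable configurations occur, we have $\tilde H_{T_H}=E_{T_E}$, $\widetilde{\mathcal F}=$ the image of $\mathcal F$, and $E_{T_E}\setminus\widetilde{\mathcal F}\supseteq E'_{T_{E'}}=\tilde H'_{T_{H'}}$; since no two $F$-edges share more than two vertices (Remark~\ref{remark:avoidableconfigs}), the map from $F$-edges to their vertex sets is injective on each of $H_{T_H}$ and $H'_{T_{H'}}$, and I would arrange the relabeling so that a hyperedge of $E'$ already present in $H_{T_H}$ keeps its $H$-label. Under that choice the unlabeled inclusion upgrades to the labeled inclusion $H_{T_H}\setminus\mathcal F\supseteq H'_{T_{H'}}$, as required.

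The main obstacle I anticipate is the bookkeeping around the \emph{time scale}: Proposition~\ref{prop:F-H_coupling}(b) only guarantees agreement of the unlabeled processes up to $T_H+\lfloor g(n)n\rfloor$, so I must confirm that every object entering the argument — the set $\mathcal F$, the partners of its members, and the entire stopped process $H'_{T_{H'}}$ produced by the black-box coupling — is supported within this window whp. For $\mathcal F$ and its partners this is exactly the definition used in Proposition~\ref{prop:coupling1}; for $H'$ one needs $T_{H'}=T_{E'}\le M\pi_+ \le T_H + \lfloor g(n)n\rfloor$ whp, which follows from the standard critical-window estimates recalled in Section~\ref{section:critwin} together with $v(H_F)\ge T_H+\lfloor g(n)n\rfloor$ whp as in the proof of Proposition~\ref{prop:F-H_coupling}. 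Once these containments are in place, the labeled statement is a formal consequence of the unlabeled one and no further combinatorics is needed.
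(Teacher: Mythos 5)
Your proposal follows the paper's approach exactly: delabel via Proposition~\ref{prop:F-H_coupling} to reduce to the unlabeled hypergraph processes, invoke the second and third process-coupling steps from \cite{heckel2024hitting} (their Propositions 5.1 and 6.1) as a black box, and then transfer the inclusion back to the labeled processes. The paper's own proof is terser, leaving the relabeling and the window-containment checks implicit, but your added bookkeeping — injectivity of the vertex-set map on the whp avoidable-configuration-free event, keeping $H$-labels on hyperedges of $E'$, and verifying that $\mathcal F$, its partners, and $T_{E'}$ all fall inside the agreement window $T_H+\lfloor g(n)n\rfloor$ — is precisely what is needed to make the paper's two-line argument rigorous, so the two proofs are the same in substance.
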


\begin{proof} 
 Note that \cite[Propositions 5.1 and 6.1]{heckel2024hitting} yield Proposition \ref{prop:couplingrandomset} for the simple random hypergraph process.
 But then, the coupling between the initial parts of the random $F$-graph process and the simple random hypergraph process from Proposition \ref{prop:F-H_coupling} completes the proof.
\end{proof}

\section{Proof of Theorem \ref{thm:Ffactor_HT}}

Theorem \ref{thm:Ffactor_HT} follows by combining the couplings in Propositions \ref{prop:coupling1} and \ref{prop:couplingrandomset} exactly as described in \cite[Section 7]{heckel2024hitting}. To summarize, we have obtained a chain of couplings that whp ``embeds'' the stopped $F$-graph process into the stopped $u$-graph process:
\[
 H'_{\HitHP}  \quad \quad \stackrel{\text{whp}}{\subseteq} \quad \quad
 H_{\HitH} \setminus \mc{F} \quad \quad \stackrel{\text{whp}}{\subseteq} \quad \quad
 H_{\HitH} \setminus \mc{E} \quad \quad \subseteq \quad \quad \cl(G_\HitG).
\]

\bibliographystyle{plain}
\bibliography{literature}

\end{document}